\numberwithin{equation}{section}
\newtheorem*{property*}{Property \csname @currentlabel\endcsname}
\newtheorem{theorem}{Theorem}[section]
\newtheorem{lemma}[theorem]{Lemma}
\theoremstyle{definition}
\newtheorem{example}[theorem]{Example}
\newtheorem{definition}[theorem]{Definition}
\newtheorem{remark}[theorem]{Remark}
\newtheorem*{remark*}{Remark}
\theoremstyle{remark}
\newenvironment{romenumerate}{\begin{enumerate}
 }{\end{enumerate}}
\newcounter{oldenumi}
{\setcounter{oldenumi}{\value{enumi}}
\begin{romenumerate} \setcounter{enumi}{\value{oldenumi}}}
{\end{romenumerate}}
\newcounter{thmenumerate}
\newenvironment{thmenumerate}
{\setcounter{thmenumerate}{0}%
 \def\item{\par
 \refstepcounter{thmenumerate}\textup{(\roman{thmenumerate})\enspace}}
}
{}
\newcounter{xenumerate}   
\newcommand\pfitem[1]{\par(#1):}
\newcommand\pfitemref[1]{\par\ref{#1}:}
\newcommand{\refT}[1]{Theorem~\ref{#1}}
\newcommand{\refL}[1]{Lemma~\ref{#1}}
\newcommand{\refR}[1]{Remark~\ref{#1}}
\newcommand{\refS}[1]{Section~\ref{#1}}
\newcommand{\refD}[1]{Definition~\ref{#1}}
\newcommand{\refE}[1]{Example~\ref{#1}}
\newcommand{\refand}[2]{\ref{#1} and~\ref{#2}}
\newcommand\marginal[1]{\marginpar{\raggedright\parindent=0pt\tiny #1}}
\xdef\klockan{\the\count1.0\the\count255}
\xdef\klockan{\the\count1.\the\count255}\fi
\newcommand{\sumi}{\sum_{i=1}^\infty}
\newcommand{\sumim}{\sum_{i=1}^m}
\newcommand{\sumiM}{\sum_{i=1}^M}
\newcommand{\sumioM}{\sum_{i=0}^M}
\newcommand{\sumiom}{\sum_{i=0}^m}
\newcommand{\sumin}{\sum_{i=1}^n}
\newcommand{\prodim}{\prod_{i=1}^m}
\newcommand\set[1]{\ensuremath{\{#1\}}}
\newcommand\bigset[1]{\ensuremath{\bigl\{#1\bigr\}}}
\newcommand\Bigset[1]{\ensuremath{\Bigl\{#1\Bigr\}}}
\newcommand\bigpar[1]{\bigl(#1\bigr)}
\newcommand\Bigpar[1]{\Bigl(#1\Bigr)}
\newcommand\biggpar[1]{\biggl(#1\biggr)}
\def\rompar(#1){\textup(#1\textup)}    
\newcommand\parfrac[2]{\Bigpar{\frac{#1}{#2}}}
\newcommand\parrfrac[2]{\biggpar{\frac{#1}{#2}}}
\def\xexp(#1){e^{#1}}
\newcommand\ceil[1]{\lceil#1\rceil}
\newcommand\floor[1]{\lfloor#1\rfloor}
\newcommand\ntoo{\ensuremath{{n\to\infty}}}
\newcommand\iid{i.i.d.\spacefactor=1000}    
\newcommand\ie{i.e.\spacefactor=1000}
\newcommand\eg{e.g.\spacefactor=1000}
\newcommand\cf{cf.\spacefactor=1000}
\newcommand{\as}{a.s.\spacefactor=1000}
\newcommand{\aex}{a.e.\spacefactor=1000}
\newcommand\whp{whp}
\newcommand{\tend}{\longrightarrow}
\newcommand\pto{\overset{\mathrm{p}}{\tend}}
\newcommand\eqd{\overset{\mathrm{d}}{=}}
\newcommand\bbN{\mathbb N}
\newcounter{CC} 
\newcounter{cc}
\newcommand\E{\operatorname{\mathbb E{}}}
\renewcommand\P{\operatorname{\mathbb P{}}}
\newcommand\ga{\alpha}
\newcommand\gd{\delta}
\newcommand\gG{\Gamma}
\newcommand\gl{\lambda}
\newcommand\gs{\sigma}
\newcommand\eps{\varepsilon}
\newcommand\cA{\mathcal A}
\newcommand\cC{\mathcal C}
\newcommand\cE{\mathcal E}
\newcommand\cF{\mathcal F}
\newcommand\cL{{\mathcal L}}
\newcommand\cN{\mathcal N}
\newcommand\cS{{\mathcal S}}
\newcommand\cU{{\mathcal U}}
\newcommand\tH{{\widetilde H}}
\def\[#1]{[\![#1]\!]}
\newcommand\qw{^{-1}}
\renewcommand{\=}{:=}
\newcommand\oi{[0,1]}
\newcommand\dd{\,\textup{d}}
\newcommand\upto{\uparrow}
\newcommand{\Lovasz}{Lov\'asz}
\newcommand{\cuq}{\overline{\cU}}
\newcommand{\cuoo}{\cU_\infty}
\newcommand{\gdcut}{\gd_\square}
\newcommand{\hdcut}{\widehat\gd_\square}
\newcommand{\rmdcut}{d_\square}
\newcommand{\exch}{exchangeable}
\newcommand{\ggw}{\gG_W}
\newcommand{\ggwx}{\gG_{\wx}}
\newcommand{\cuc}{\cU_{\mathsf{c}}}
\newcommand{\cucx}{\cuc'}
\newcommand{\agg}{\ensuremath{\ga\Gamma_1\oplus(1-\ga)\Gamma_2}}
\newcommand{\aggi}{\ensuremath{\ga\Gamma'\oplus(1-\ga)\Gamma''}}
\newcommand{\oplusM}{\bigoplus_{i=1}^M}
\newcommand{\oplusm}{\bigoplus_{i=1}^m}
\newcommand{\oplusom}{\bigoplus_{i=0}^m}
\newcommand{\oplusoM}{\bigoplus_{i=0}^M}
\newcommand{\oplusmy}{\bigoplus_{1}^m}
\newcommand{\oplusiiM}{\bigoplus_{i=2}^M}
\newcommand{\oplusMii}{\bigoplus_{i=1}^{M''}}
\newcommand{\oplusiiMii}{\bigoplus_{i=2}^{M''}}
\newcommand{\oplusooy}{\bigoplus_{1}^\infty}
\newcommand{\seq}[2]{(#1_i)_{1}^{#2}}
\newcommand{\seqq}[2]{(#1_i)_{i=1}^{#2}}
\newcommand{\noll}{\ensuremath{\mathbf0}}
\newcommand{\gnw}{G(n,W)}
\newcommand{\goow}{G(\infty,W)}
\newcommand{\gng}{G(n,\gG)}
\newcommand{\goog}{G(\infty,\gG)}
\newcommand{\googi}{G(\infty,\gG_i)}
\newcommand{\csmu}{\ensuremath{(\cS,\mu)}}
\newcommand{\csmui}{\ensuremath{(\cS_i,\mu_i)}}
\newcommand{\aaa}{\cA}
\newcommand{\aaaq}{\cA_1}
\newcommand{\aaap}{\cA_+}
\newcommand{\aaaz}{\cA^{\downarrow}}
\newcommand{\unionq}{\dot{\bigcup}}
\newcommand{\cci}{\cC_1}
\newcommand{\pig}{\Pi_\gG}
\newcommand{\tfdot}{\ensuremath{t(F,\cdot)}}
\newcommand{\gin}{G_{in}}
\newcommand{\gjn}{G_{jn}}
\newcommand{\ggxn}{\gG_{(n)}}
\newcommand{\nn}{\cN}
\newcommand{\nnx}{\nn_x}
\newcommand{\restr}[1]{|_{#1}}
\newcommand{\wx}{W^*}
\newcommand{\oil}{\ensuremath{(\oi,\gl)}}
\newcommand{\nnn}{[n]}
\newcommand{\qv}{\widetilde V}
\newcommand{\qg}{\widetilde G}
\newcommand{\ax}{\widehat A}
\newcommand{\cin}{\cC_1(n)}
\newcommand{\gxin}{G_1(n)}
\newcommand{\qgin}{\qg_i[n]}
\newcommand{\sumimi}{\sum_{i>M'}}
\newcommand\REM[1]{{\raggedright\texttt{[#1]}\par\marginal{XXX}}}
\newcommand\urladdrx[1]{{\urladdr{\def~{{\tiny$\sim$}}#1}}}
\begin{document}
\title
{Connectedness in graph limits}

\date{February 26, 2008; revised April 10, 2008} 

\author{Svante Janson}
\address{Department of Mathematics, Uppsala University, PO Box 480,
SE-751~06 Uppsala, Sweden}
\email{svante.janson@math.uu.se}
\urladdrx{http://www.math.uu.se/~svante/}

\subjclass[2000]{} 

\begin{abstract} 
We define direct sums and a corresponding notion of connectedness for
graph limits. Every graph limit has a unique decomposition as a
direct sum of connected components. As is well-known, graph limits may
be represented by symmetric functions on a probability space; there
are natural definitions of direct sums and connectedness for such
functions, and there is a perfect correspondence with the
corresponding properties of the graph limit.
Similarly, every graph limit determines an infinite random graph,
which is a.s.\ connected if and only if the graph limit is connected.
There are also characterizations in terms of the asymptotic size of
the largest component in the corresponding finite random graphs, and
of minimal cuts in sequences of graphs converging to a given limit.
\end{abstract}

\maketitle

\section{Introduction and main results}\label{S1}

A deep and beautiful
theory of limit objects of dense graphs has in recent years been created
by
\citet{LSz} and
\citet{BCLi,BCLii}, and further developed in a series of further papers by
these and other 
authors. 
(In particular, \citet{BBCR} contains some
ideas similar to the ones in the present paper.)
Some basic components of the theory are reviewed below and in \refS{Slimit},
following the presentation in \citet{SJ209}.

We let $\cU$ be the set of unlabelled graphs; as is explained in
\refS{Slimit}, this is embedded in a compact metric space $\cuq$,
and we let $\cuoo\=\cuq\setminus\cU$, the (compact) set of 
graph limits. 

We denote the vertex and edge sets of a graph $G$ by $E(G)$ and
$V(G)$, and let $v(G)\=|V(G)|$ and $e(G)\=|E(G)|$ be the numbers of
vertices and edges.
All graphs in this paper are simple (\ie, without multiple edges or
loops) and undirected; we further assume $0<v(G)<\infty$,
except sometimes when we explicitly consider infinite graphs or
(sub)graphs that may be without vertices.

A well-known basic property for graphs is connectedness. We write the
disjoint union of two graphs $G_1$ and $G_2$ as $G_1\oplus G_2$; thus
$V(G_1\oplus G_2)=V(G_1)\cup V(G_2)$ and $E(G_1\oplus G_2)=E(G_1)\cup
E(G_2)$, where we assume that $V(G_1)\cap V(G_2)=\emptyset$.
(We usually work with unlabelled graphs, which can be regarded as
equivalence classes of labelled graphs, and then $V(G_1)\cap
V(G_2)=\emptyset$ can always be assumed by relabelling the vertices
when necessary.)
A graph is \emph{connected} if and only if it cannot be written as $G_1\oplus
G_2$ for two graphs $G_1$ and $G_2$. It is well-known, and easy to see,
that every graph $G$ can be uniquely decomposed as a disjoint union
$G_1\oplus G_2\oplus\dots\oplus G_M$ of connected graphs, the
\emph{components} of $G$.

The first aim of the present paper is to define an analoguous notion
of direct sum of graph limits and to establish analogues of these standard
results for graphs.
The natural definition seems to be that of a weighted sum. In \refS{Sdef},
we define for any two graph limits $\Gamma_1$ and $\Gamma_2$ in $\cuoo$
and any $\ga\in\oi$ a graph limit $\ga\Gamma_1\oplus(1-\ga)\Gamma_2$.
One way these sums can be characterized is the following, which shows
the connection with disjoint unions of graphs, and further yields an
explanation of the weight $\ga$.

All unspecified limits below are as \ntoo.

\begin{theorem}
  \label{TA1}
Suppose that $G_n$ and $G_n'$, $n\ge1$, are graphs  that satisfy
 $v(G_n)\to\infty$,  $v(G_n')\to\infty$,
 $G_n\to\Gamma\in\cuoo$ and
 $G_n'\to\Gamma'\in\cuoo$.
If further $v(G_n)/\bigpar{v(G_n)+v(G'_n)}\to\ga\in\oi$, then
$G_n\oplus G_n'\to\ga\Gamma\oplus(1-\ga)\gG'$.
\end{theorem}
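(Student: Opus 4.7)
The plan is to verify convergence by checking that homomorphism densities $t(F,G_n\oplus G_n')$ converge to $t(F,\ga\gG\oplus(1-\ga)\gG')$ for every finite graph $F$, since convergence in $\cuq$ is characterized by convergence of $t(F,\cdot)$ for all $F$. It is enough to treat a connected $F$ as a building block, so I would first spell out the behavior of $t(F,\cdot)$ under disjoint union of graphs, then pass to the limit, and finally match the answer to the definition of $\ga\gG\oplus(1-\ga)\gG'$ from \refS{Sdef}.

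The combinatorial key is simple: since homomorphisms preserve adjacency, any homomorphism from $F$ into $G\oplus G'$ must send each connected component of $F$ either entirely into $G$ or entirely into $G'$. Writing $F=F_1\oplus\cdots\oplus F_k$ for the component decomposition, this gives
\[
\hom(F,G\oplus G')=\prod_{i=1}^k\bigpar{\hom(F_i,G)+\hom(F_i,G')}.
\]
Setting $n_1\=v(G)$, $n_2\=v(G')$, dividing by $(n_1+n_2)^{v(F)}=\prod_i(n_1+n_2)^{v(F_i)}$, and expanding yields
\[
t(F,G\oplus G')=\sum_{S\subseteq[k]}\prod_{i\in S}t(F_i,G)\parfrac{n_1}{n_1+n_2}^{v(F_i)}\prod_{i\notin S}t(F_i,G')\parfrac{n_2}{n_1+n_2}^{v(F_i)}.
\]

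Now I apply this to $G_n$ and $G_n'$. Since $v(G_n),v(G_n')\to\infty$, we have $G_n\to\gG$ and $G_n'\to\gG'$, so $t(F_i,G_n)\to t(F_i,\gG)$ and $t(F_i,G_n')\to t(F_i,\gG')$ for every $i$ and every connected $F_i$. Together with $n_1/(n_1+n_2)\to\ga$ and $n_2/(n_1+n_2)\to1-\ga$, passing term by term to the limit in the finite sum above gives
\[
t(F,G_n\oplus G_n')\to\sum_{S\subseteq[k]}\prod_{i\in S}\ga^{v(F_i)}t(F_i,\gG)\prod_{i\notin S}(1-\ga)^{v(F_i)}t(F_i,\gG').
\]

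The remaining step, and the only place where one has to invoke material from \refS{Sdef}, is to recognize this limiting quantity as $t(F,\ga\gG\oplus(1-\ga)\gG')$. I expect this to be essentially the defining property of the weighted direct sum of graph limits: multiplicativity of $t(F,\cdot)$ over components of $F$ together with the scaling factors $\ga^{v(F_i)}$, $(1-\ga)^{v(F_i)}$ is exactly what a weighted disjoint union should produce, whether $\ga\gG\oplus(1-\ga)\gG'$ is defined at the level of graphons (by placing a rescaled copy of each representing kernel on a block of measure $\ga$ resp.\ $1-\ga$) or directly through the $t$-values. Matching the definition with the formula above is the one piece of bookkeeping that could become an obstacle if the direct sum is defined only indirectly; in that case I would verify the identity for simple test graphs $F$ first (\eg, $F$ connected, where only the two extreme terms $S=[1]$ and $S=\emptyset$ survive and one reads off $t(F,\ga\gG\oplus(1-\ga)\gG')=\ga^{v(F)}t(F,\gG)+(1-\ga)^{v(F)}t(F,\gG')$) and then extend by multiplicativity over components of $F$.
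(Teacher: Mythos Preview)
Your argument is correct and follows essentially the same route as the paper: compute $t(F,G_n\oplus G_n')$ combinatorially, pass to the limit, and match against the defining formula \eqref{t1a} for $\ga\gG\oplus(1-\ga)\gG'$. The only organizational difference is that the paper first isolates \refL{LUC} (connected test graphs $F\in\cucx$ suffice to determine convergence and graph limits), so that one may work with connected $F$ from the outset and use only the two-term formula of \refL{L1}, avoiding your sum over subsets $S\subseteq[k]$; your more general computation is correct but unnecessary once \refL{LUC} is in hand.
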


The proof is given in \refS{Sdef}, while other results stated in this
section are proven in \refS{Spf}.

We use the analogy with graphs to define connectedness for graph
limits.

\begin{definition}
  \label{Dconn}
A graph limit $\gG\in\cuoo$ is \emph{disconnected} if there exist
graph limits $\gG_1,\gG_2\in\cuoo$ and $\ga\in(0,1)$ such that
$\gG=\agg$.
If this is not the case, $\gG$ is \emph{connected}.
\end{definition}

It is not obvious that there are any connected graph limits at all,
but this follows from the theorems below; an explicit example is given
in \refE{Equasi}. 

\begin{remark}
  Graph limits are defined in a rather abstract way, by taking the
  completion of $\cU$ in a suitable metric, and they are not
  constructed as topological spaces, nor can they (as far as I know)
  be seen as topological spaces in any meaningful way (as graphs can).
Hence, the connectedness defined and studied here is not the usual
  topological notion.
\end{remark}

\begin{definition}\label{Dcomp}
  Let $\gG\in\cuoo$. A \emph{component} of $\gG$ is a connected graph
  limit $\gG_1\in\cuoo$ such that $\gG=\agg$ for some $\ga\in(0,1]$
  and $\gG_2\in\cuoo$.
\end{definition}

It is not obvious that components exist, but we will see that every
graph limit has at least one component, with one exception discussed in
\refR{R0}.

A graph limit may have an infinite number of components. (An example
is given in \refE{Einfty}.)
Hence, in order to exhibit a graph limit as the sum of its components,
we need to extend the direct sum to the case of an infinite number of
terms. This too is done in \refS{Sdef}, where we define the direct sum
$\oplusm\ga_i\gG_i$ for any finite or infinite sequence $\seqq\gG m$
of graph limits and corresponding weights $\seqq{\ga}m$ with
$\ga_i\ge0$ and $\sumim\ga_i\le1$.
Let $\aaa$ be the set of all such sequences
$\seqq{\ga}m$ with $0\le m\le\infty$, every $\ga_i\ge0$ and
$\sumim\ga_i\le1$.
Further, 
let $\aaap$ be the subset of $\aaa$ of such sequences with every $\ga_i>0$,
and let $\aaaq$ be the subset of $\aaa$ of sequences with $\sumim\ga_i=1$.
(Although we allow $\ga_i=0$ in the definition of
$\oplusm\ga_i\gG_i$, 
all such terms may be deleted without changing the direct sum, \cf{}
\refD{Dsum} and \eqref{t1b}, and thus there is no essential restriction
to consider $\aaap$ only.)

\begin{theorem}
  \label{Tcomp}
Every graph limit $\gG\in\cuoo$ can be written as a finite or infinite
direct sum $\oplusM\ga_i\gG_i$ for some $M$ with $0\le M\le\infty$ and
some sequences $\seqq\gG M$ and $\seqq\ga M$ with every $\gG_i\in\cuoo$
connected and $\seqq\ga M\in\aaap$.

The number $M$ and the sequences $\seq\gG M$ and $\seq\ga M$ are
uniquely determined by $\gG$, up to simultaneous permutations of
the two sequences.

The set of components of $\gG$ equals $\set{\gG_i}_{i=1}^M$, \ie, a
graph limit $\gG'$ is a component of $\gG$ if and only if $\gG'=\gG_i$
for some $i$.
\end{theorem}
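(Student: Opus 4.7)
The plan is to pass to a graphon representation of $\gG$ and build the decomposition from a natural measurable equivalence relation on the underlying probability space; all three assertions of the theorem should then follow from the interplay between this equivalence relation and the direct-sum definition developed in \refS{Sdef}.

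For \emph{existence}, represent $\gG$ by a symmetric measurable $W$ on a probability space $(\cS,\mu)$ and let $R$ be the smallest measurable equivalence relation on $\cS$ under which $W(x,y)>0$ forces $x\mathrel{R}y$. A standard measure-theoretic construction shows that at most countably many $R$-classes have positive measure; list these as $A_1,A_2,\dots$ with $\ga_i=\mu(A_i)>0$ and $\sum_i\ga_i\le 1$. By construction together with a Fubini argument, $W=0$ almost everywhere off $\bigcup_iA_i\times A_i$: cross blocks are empty by definition of $R$, and on the complement $\cS\setminus\bigcup_iA_i$ the set of $R$-related pairs is null. Hence the direct-sum definition from \refS{Sdef} gives $\gG=\bigoplus_i\ga_i\gG_i$, where $\gG_i$ is the graph limit of $W$ restricted and rescaled to $A_i$. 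Each $\gG_i$ is connected: any further nontrivial splitting of $A_i$ into positive-measure pieces with $W$ vanishing across would refine $R$ and contradict its minimality.

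For \emph{uniqueness}, any second decomposition $\gG=\bigoplus_j\gb_j\gD_j$ into connected pieces $\gD_j$ corresponds to another block-diagonal graphon representation with blocks $\{B_j\}$ of masses $\gb_j$; after identifying the two representations by a measure-preserving map we may take both on the same $(\cS,\mu)$. Since $W$ vanishes off the $B_j$-diagonal, the relation ``same $B_j$'' contains $R$, so each $B_j$ is a union of $A_i$'s modulo null sets. If some $B_j$ properly contained more than one $A_i$, then $\gD_j$ would split as a nontrivial direct sum of the corresponding $\gG_i$'s, contradicting its connectedness. Hence each $B_j$ coincides with some $A_i$, with $\gb_j=\ga_i$ and $\gD_j=\gG_i$. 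The component characterization is now immediate: each $\gG_i$ appears as a component via $\gG=\ga_i\gG_i\oplus(1-\ga_i)\gG_i'$, obtained by regrouping the remaining summands; conversely, for any component $\gG'$ of $\gG$ we have $\gG=\ga\gG'\oplus(1-\ga)\gG''$ with $\ga>0$, and decomposing $\gG''$ into its own connected pieces yields a decomposition of $\gG$ into connected pieces containing $\gG'$, so uniqueness forces $\gG'=\gG_i$ for some $i$.

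The main obstacle I anticipate is the measure-theoretic part of the existence step: constructing $R$ in a measurable way and establishing that its positive-measure classes form an at-most-countable partition modulo null sets, outside which $W$ vanishes a.e. This is where the real-analytic content of the proof sits; once it is in place, and the correspondence between direct sums of graph limits and block-diagonal graphons from \refS{Sdef} is invoked, the uniqueness and component statements reduce to routine bookkeeping.
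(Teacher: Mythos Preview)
Your existence argument is essentially the paper's: represent $\gG$ by a kernel $W$, decompose $W$ into connected pieces (the paper packages this as \refL{LWcomp}, quoted from \cite{SJ178}), and then invoke the correspondence between direct sums of kernels and of graph limits (\refT{TK2}) together with \refT{TK1}. The equivalence-relation construction you sketch is one route to \refL{LWcomp}, and the measure-theoretic difficulty you anticipate is indeed the content of that lemma. Your component characterization also matches the paper's: each $\gG_i$ is a component by associativity, and conversely decompose $\gG''$ and invoke uniqueness.

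The genuine gap is in your uniqueness argument, and it is precisely the step you treat as routine. You write: ``after identifying the two representations by a measure-preserving map we may take both on the same $(\cS,\mu)$.'' But two kernels $W_1,W_2$ with $\gG_{W_1}=\gG_{W_2}$ are \emph{not} in general related by a measure-preserving bijection, nor even by a single measure-preserving map in one direction; the correct characterization of kernel equivalence is subtle (see the discussion in \refS{Slimit} and the references \cite{BR,BCLi,SJ209,Kallenberg:exch}), and even granting it, aligning the two block structures under the resulting coupling is not automatic. The paper explicitly flags this: ``The proof of uniqueness is more complicated because the kernel $W$ is not unique, and consequently it is not enough to show uniqueness of the decomposition of $W$ in \refL{LWcomp}.'' Its remedy is to bypass kernels entirely and use the infinite random graph $\goog$, whose distribution is canonically determined by $\gG$: \refT{TGcomp} shows that the infinite components of $\goog$ a.s.\ have asymptotic densities and subgraph limits, and that the resulting sequence of pairs is a.s.\ a permutation of $\set{(\gG_i,\ga_i)}_{i=1}^M$, which gives uniqueness immediately. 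You have misidentified where the hard part lies.
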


We call the number $M$ in \refT{Tcomp} the \emph{number of components}
of $\gG$.

\begin{remark}
  There may be repetitions in the sequence $\seq\gG M$; hence the
  number of distinct components, \ie, the number of graph limits
  $\gG'$ that are components of $\gG$, is not necessarily equal to
  $M$.
(For example, take any connected $\gG_1$ and let
  $\gG=\frac12\gG_1\oplus\frac12\gG_1$.) 
\end{remark}

\begin{remark}
  \label{Rdeficient}
It may seem surprising that we allow $\sum_i\ga_i<1$ in the definition
of direct sums and in \refT{Tcomp}. However, such \emph{deficient}
direct sums are equivalent to \emph{complete} ones (\ie,
$\sum_i\ga_i=1$) as follows.
There is a special (and trivial) graph limit  $\noll\in\cuoo$, which is the
limit of the empty graphs $E_n$, see \refE{Enoll}, and for any
sequences $\seq\gG M$ and $\seq\ga M$ with $\ga_i\ge0$ and
$\sumiM\ga_i<1$, if we let $\ga_0\=1-\sumiM\ga_i$ and $\gG_0=\noll$,
then
$\oplusM\ga_i\gG_i=\oplusoM\ga_i\gG_i$, where the latter direct
sum is complete.

Conversely, any summand $\ga_i\noll$ in a direct sum may always be
deleted.

It is thus a matter of taste whether we want to allow deficient direct
sums or not, in the latter case instead allowing a term $\ga_0\noll$
in the decomposition in \refT{Tcomp}. We prefer the version above,
partly because $\noll$ is disconnected and thus not a component of $\gG$.
\end{remark}

\begin{remark}
  \label{R0}
We allow $M=0$ in \refT{Tcomp}, but this occurs only in a trivial
case:
$M=0\iff$ $\gG$ has no components
$\iff$ $\gG=\noll$.
\end{remark}

\begin{remark}
  \label{R1}
A graph limit $\gG$ is connected if and only if it
has $M=1$ and $\ga_1=1$ in \refT{Tcomp}. Note that a graph limit with
$M=1$ and $\ga_1<1$ is \emph{not} connected since it equals the direct
sum $\ga_1\gG_1\oplus(1-\ga_1)\noll$, \cf{} \refR{Rdeficient}.  
\end{remark}

\subsection{Connectedness in graphs and their limits}

It should be obvious that connectedness (or its opposite) of the graphs
$G_n$ in a convergent sequence does not say anything about the limit.
In fact, the convergence of $G_n$ to a limit $\gG$ is not sensitive to
addition or deletion of $o(v(G_n)^2)$ edges to/from $G_n$, and such
changes might create or destroy connectedness.
For example, assuming $G_n\to\gG$, we may add edges from
a given vertex, say 1, in $G_n$, to all other vertices, thus creating
connected graphs $G_n'$ with $G_n'\to\gG$. 
Similarly, we may delete all edges having
vertex 1 as an endpoint, thus making 1 isolated and creating
disconnected graphs $G_n''$ with $G_n''\to\gG$. 

Instead, the connectedness of the limit of a sequence of graphs is
connected to quantitative connectedness properties of the graphs, more
precisely the sizes of minimal cuts.
Given two subsets $V',V''$ of the vertex set $V(G)$ of a graph $G$,
we let $e(V',V'')=e_G(V',V'')$ 
denote the number 
$|\set{ij\in E(G):i\in  V',\,j\in V''}|$ of edges between the two sets.

\begin{theorem}
  \label{Tcut}
Suppose that $G_n$ are graphs with $v(G_n)\to\infty$ and that
$G_n\to\gG\in\cuoo$. 
\begin{romenumerate}
  \item
If\/ $\gG$ is connected, then for every $\gd>0$ there exists $\eps>0$
such that, for all large $n$, if $V(G_n)=V'\cup V''$ is a partition
with $|V'|,|V''|\ge\gd v(G_n)$, then $e(V',V'')\ge\eps v(G_n)^2$.
  \item
If\/ $\gG$ is disconnected, then there exists $\gd>0$ and, for all large
$n$, 
a partition $V(G_n)=V'\cup V''$ 
with $|V'|,|V''|\ge\gd v(G_n)$ such that $e(V',V'')=o\bigpar{v(G_n)^2}$.
\end{romenumerate}
\end{theorem}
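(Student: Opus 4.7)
I prove (i) by contradiction using \refT{TA1} together with the robustness of convergence in $\cuoo$ under $o(v^2)$ edge modifications, and (ii) directly by representing $\gG$ as a block-diagonal graphon and invoking the cut-metric characterization of convergence in $\cuoo$.

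\emph{Part (i).} Suppose $\gG$ is connected but the conclusion fails: for some $\gd>0$ there is a subsequence (still called $G_n$) admitting partitions $V(G_n)=V'_n\cup V''_n$ with $|V'_n|,|V''_n|\ge\gd v(G_n)$ and $e(V'_n,V''_n)=o(v(G_n)^2)$. By compactness of $\cuq$ and of $[\gd,1-\gd]$, I may pass to a further subsequence along which $|V'_n|/v(G_n)\to\ga\in[\gd,1-\gd]$ and the induced subgraphs satisfy $G_n[V'_n]\to\gG'$, $G_n[V''_n]\to\gG''$ for some $\gG',\gG''\in\cuoo$; the limits lie in $\cuoo$ because $|V'_n|,|V''_n|\ge\gd v(G_n)\to\infty$. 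Applying \refT{TA1} to the disjoint union yields
\begin{equation*}
G_n[V'_n]\oplus G_n[V''_n] \tend \ga\gG'\oplus(1-\ga)\gG''.
\end{equation*}
But $G_n$ differs from $G_n[V'_n]\oplus G_n[V''_n]$ only in the $e(V'_n,V''_n)=o(v(G_n)^2)$ cross-edges, and each homomorphism density $t(F,\cdot)$ is insensitive to $o(v^2)$ edge changes, so $G_n$ has the same limit. Hence $\gG=\ga\gG'\oplus(1-\ga)\gG''$ with $\ga\in(0,1)$, contradicting the connectedness of $\gG$.

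\emph{Part (ii).} Write $\gG=\ga\gG_1\oplus(1-\ga)\gG_2$ with $\ga\in(0,1)$ and put $\gd=\min(\ga,1-\ga)/2$. Represent $\gG$ by a graphon $W$ in block-diagonal form so that $W\equiv 0$ off $[0,\ga]^2\cup(\ga,1]^2$. The equivalence of convergence in $\cuoo$ with convergence in the cut metric (reviewed in \refS{Slimit}) supplies measure-preserving bijections $\phi_n$ of $\oi$ with $\|W_{G_n}-W^{\phi_n}\|_\square\to 0$, where $W_{G_n}$ is the natural step graphon of $G_n$. Let $V'_n$ consist of the vertices whose associated $1/v(G_n)$-intervals lie mostly in $\phi_n^{-1}([0,\ga])$, and put $V''_n=V(G_n)\setminus V'_n$. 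Then $|V'_n|/v(G_n)\to\ga$, so $|V'_n|,|V''_n|\ge\gd v(G_n)$ for large $n$, and
\begin{equation*}
\frac{e(V'_n,V''_n)}{v(G_n)^2} \;\le\; \|W_{G_n}-W^{\phi_n}\|_\square + \int_{[0,\ga]\times(\ga,1]} W\,dx\,dy + o(1) \;=\; o(1),
\end{equation*}
using that $W$ vanishes on the off-diagonal block.

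The main obstacle lies in (ii): passing from the abstract graphon decomposition of $\gG$ to an honest partition of $V(G_n)$, while controlling the rounding that arises when one discretizes the measurable set $\phi_n^{-1}([0,\ga])$ to a union of vertex-intervals. This rests on the cut-metric characterization of graph-limit convergence of \citet{BCLi}; provided this machinery is available from \refS{Slimit}, the discretization step is routine.
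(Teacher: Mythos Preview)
Your argument for (i) is essentially the paper's: contradiction, subsequence extraction, compactness to obtain limits $\gG',\gG''$ and $\ga\in[\gd,1-\gd]$, then \refT{TA1} together with the stability of $t(F,\cdot)$ under $o(v^2)$ edge deletions to conclude $\gG=\ga\gG'\oplus(1-\ga)\gG''$.

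For (ii) both you and the paper invoke the cut-distance results of \citet{BCLi}, but the routes differ, and your discretization step has a real gap. The paper never discretizes a measurable subset of $\oi$. Instead it builds auxiliary \emph{graphs} $H_n=H'_{n'(n)}\oplus H''_{n''(n)}$ with $v(H_n)=v(G_n)$ and $H_n\to\gG$ (again via \refT{TA1}); since $G_n$ and $H_n$ have the same vertex set, \cite[Theorem~2.3]{BCLi} gives $\hdcut(G_n,H_n)\to0$, and the optimal relabelling $\tH_n\cong H_n$ hands you the vertex partition of $V(G_n)$ directly, with $e_{G_n}(V',V'')\le\rmdcut(G_n,\tH_n)v(G_n)^2=o(v(G_n)^2)$.

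Your route instead takes a measure-preserving $\phi_n:\oi\to\oi$ with small $\|W_{G_n}-W^{\phi_n}\|_\square$ and rounds $\phi_n^{-1}([0,\ga])$ to vertex intervals by a ``mostly in'' rule. But nothing forces $|V'_n|/v(G_n)\to\ga$: a measure-preserving $\phi_n$ can spread $\phi_n^{-1}([0,\ga])$ so that \emph{every} vertex interval $I_i$ meets it in exactly an $\ga$-fraction, and then your rule returns $V'_n=V(G_n)$ or $V'_n=\emptyset$ according as $\ga\ge\tfrac12$ or not. The same phenomenon kills the edge bound, since the symmetric difference between your $A_n$ and $\phi_n^{-1}([0,\ga])$ need not be $o(1)$, and that error enters the estimate for $\int_{A_n\times B_n}W_{G_n}$ with no small prefactor. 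You do not get to choose $\phi_n$ to be step-respecting; you only know one exists making the cut norm small. The clean fix is precisely the paper's move: replace the graphon target $W$ by a graph $H_n$ on the same vertex set, so that the near-optimal overlay is a vertex bijection rather than an arbitrary measure-preserving map of $\oi$.
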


\refT{Tcut}(ii) (in a version using kernels, \cf, \refT{TK1} below,
is given (although not as a formal theorem) by \citet{BBCR}.

\subsection{Kernels}

As was shown by \citet{LSz}, see \refS{Slimit} for details, every
graph limit may be represented by a \emph{kernel} $W$
on a probability space \csmu, \ie, a
symmetric measurable function $W:\cS\times\cS\to\oi$.
(Furthermore, \csmu{} may be, and often is, chosen as \oil,
where $\gl$ is Lebesgue measure, but we will not insist on that.)
We let $\ggw\in\cuoo$ denote the graph limit represented by $W$. Note
that the representation is not unique; different $W$, even on
\oil, may give the same $\ggw$; again see \refS{Slimit}.

\begin{remark}\label{Rgraphon}
  \citet{BCLi,BCLii} use the term \emph{graphon} for such functions
  $W$.
However, they also consider the graphons as the graph limits, thus
  identifying equivalent graphons. We prefer for our purposes to be more 
  specific, and in order to avoid possible confusion between the two
  uses of ``graphon'', we will not use this term here. As just said,
  we use instead  ``kernel'' for such functions $W$.
\end{remark}

There are natural definitions of connectedness and sums for kernels,
which, as we shall see in Theorems \refand{TK1}{TK2}, correspond
directly to the corresponding 
notions for graph limits. (The non-uniqueness of the representation
does not cause any complications in the results, although we have to
worry about it in some proofs.)

\begin{definition}\label{DWconn}
  A kernel $W$ on \csmu{} is \emph{disconnected} if 
either $W=0$ \aex{} on $\cS\times\cS$ or
there exists a
  subset $A\subset\cS$ with $0<\mu(A)<1$ such that $W=0$ \aex{} on
  $A\times(\cS\setminus A)$. Otherwise $W$ is \emph{connected}.
\end{definition}

The exceptional (and trivial) case $W=0$ \aex{} has to be treated
separately only 
when $\cS$ is an atom, \ie{} if $\mu(A)=0$ or $1$ for every
measurable $A$. 

The same properties of kernels
were defined and studied in \cite{SJ178}
(in a somewhat more general situation),
but there called \emph{reducible} and \emph{irreducible}; the same
terms were used in \cite{BBCR}, where these properties where studied
further (in a similar context as the present paper).
(The trivial case $W=0$ \aex{} was not treated separately in
\cite{SJ178}; 
this made no difference there, but was with hindsight perhaps a mistake.)

By the \emph{disjoint union} $\unionq\cS_i$
of sets $\cS_i$, we mean their usual
union if the sets are disjoint; if not, we first make them disjoint by
replacing $\cS_i$ by $\cS_i\times\set i$.

\begin{definition}\label{DWsum}
  Let $0\le m\le\infty$ and let $\seq{\ga} m\in\aaa$; 
further, for
  each $i$, let $W_i$ be a kernel on a probability space \csmui.
  \begin{romenumerate}
	\item
In the complete case, $\seq{\ga} m\in\aaaq$,
let $\cS$ be the disjoint union $\unionq\cS_i$,
let $\mu$ be the probability measure on $\cS$ given by
$\mu(A)\=\sumim\ga_i\mu_i(A\cap\cS_i)$, and let 
the direct sum $\oplusm\ga_i W_i$ be
the kernel $W$ on \csmu{} defined by
\begin{equation*}
  W(x,y)=
  \begin{cases}
W_i(x,y), & x,y\in\cS_i;
\\
0, & x\in\cS_i,\, y\in\cS_j \text{ with }i\neq j.
  \end{cases}
\end{equation*}
\item
In the deficient case, $\seq{\ga} m\in\aaa\setminus\aaaq$,
take any probability space $(\cS_0,\mu_0)$, let $W_0\=0$ (on
$\cS_0\times\cS_0$) and $\ga_0\=1-\sumim\ga_i$, and define
$\oplusm\ga_iW_i\=\oplusom\ga_iW_i$.
  \end{romenumerate}
\end{definition}

For the definition of $\oplusm\ga_i W_i$ in the deficient case, \cf{}
\refR{Rdeficient} and note that $\noll$ is represented by $W_0=0$ on
any $\csmu$.
Our definition does not specify $\cS_0$ and $\mu_0$ and is thus
formally not a 
proper definition, but any choice will do for our purposes and the
flexibility is convenient.

\begin{remark}
  If $\csmui=(\oi,\gl)$ for every $i$, it is natural to make linear
  changes of variables to replace $\cS_i$ by the interval
  $I_i\=[\gs_{i-1},\gs_i)$ of length $\ga_i$, where $\gs_i\=\sum_{j\le
  i}\ga_j$; note that $(I_i)_1^m$ form a partition of $[0,1)$ and we
  obtain $\csmu=([0,1),\gl)$ or, if we prefer, $(\oi,\gl)$.
\end{remark}

\begin{remark}
  The notation for deficient sums of kernels has to be used with some
  care
(in particular in the case $m=1$):
the summands 
$\ga_iW_i$ do not mean the usual product.
\end{remark}

\begin{theorem}
  \label{TK1}
Let the graph limit $\gG$ be represented by a kernel $W$ on a
probability space \csmu. Then $\gG$ is connected if and only if $W$ is.
\end{theorem}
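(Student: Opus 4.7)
I plan to prove the two implications separately; the reverse one carries the real content.

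\emph{Forward direction: $W$ disconnected implies $\gG$ disconnected.}
If $W=0$ \aex{} on $\cS\times\cS$, then $W$ represents $\noll$, which by \refR{R0} has $M=0$ and hence is not connected by \refR{R1}; so $\gG=\noll$ is disconnected by \refD{Dconn}. Otherwise, pick $A\subseteq\cS$ with $\ga\=\mu(A)\in(0,1)$ and $W=0$ \aex{} on $A\times(\cS\setminus A)$. Let $W_1,W_2$ be the restrictions of $W$ to $A$ and $\cS\setminus A$, equipped with the normalized conditional measures. Up to a trivial identification of base spaces, $W$ coincides with $\ga W_1\oplus(1-\ga)W_2$ in the sense of \refD{DWsum}. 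Using the compatibility between kernel direct sums and graph-limit direct sums established in \refS{Sdef} (which follows by applying \refT{TA1} to samples $G(n,W)$ that split into near-$\ga n$ and near-$(1-\ga)n$ parts with no cross-edges), we get $\gG=\ga\gG_{W_1}\oplus(1-\ga)\gG_{W_2}$, exhibiting $\gG$ as disconnected.

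\emph{Reverse direction: $\gG$ disconnected implies $W$ disconnected.}
Write $\gG=\ga\gG'\oplus(1-\ga)\gG''$ with $\ga\in(0,1)$ and $\gG',\gG''\in\cuoo$, and represent $\gG',\gG''$ by kernels $W'$ on $(\cS',\mu')$ and $W''$ on $(\cS'',\mu'')$. Then $\widetilde W\=\ga W'\oplus(1-\ga)W''$ of \refD{DWsum} is a kernel for $\gG$ that is manifestly disconnected (with disconnecting set $\cS'$ of measure $\ga$). The real task is to transfer disconnectedness from $\widetilde W$ to $W$ itself. For this I would invoke the standard fact, proved in \citet{BCLii} and used throughout \refS{Slimit}, that two kernels representing the same graph limit become a.e.\ equal after pullback to $(\oi,\gl)$ via suitable measure-preserving maps. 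Taking such pullbacks $\phi\colon\oi\to\cS$ and $\psi\colon\oi\to\cS'\unionq\cS''$, the set $B\=\psi^{-1}(\cS')\subseteq\oi$ has $\gl(B)=\ga$ and disconnects the pullback of $W$; pushing forward by $\phi$ should then yield a measurable $A\subseteq\cS$ with $\mu(A)=\ga$ and $W=0$ \aex{} on $A\times(\cS\setminus A)$.

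The principal obstacle is this final transfer step. The coupling theorem, as commonly stated, guarantees equality only up to a further measure-preserving rearrangement rather than literal equality of pullbacks, and $\phi$ need not be injective, so producing the genuine disconnecting set in $\cS$ (rather than merely in $\oi$) calls for a careful measure-theoretic argument via image measures and disintegration. If this direct route proves awkward, I would fall back on the infinite exchangeable random graph $\goow$: its a.s.\ connectedness is an intrinsic invariant of $\gG$, so whenever $\gG$ is disconnected the latent variables $X_i\in\cS$ split almost surely into at least two infinite classes with no cross-edges, and a de Finetti/exchangeability argument converts this almost sure partition of $\bbN$ into the desired measurable partition of $(\cS,\mu)$, giving disconnectedness of $W$.
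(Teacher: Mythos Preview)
Your forward direction is essentially the paper's \refL{L2}: once $W$ is disconnected you split it as $\ga W_1\oplus(1-\ga)W_2$ and apply \refT{TK2} to get $\gG=\ga\gG_{W_1}\oplus(1-\ga)\gG_{W_2}$. (Minor point: for $W=0$ \aex{} you should cite \refE{Enoll} directly rather than \refR{R0}--\refR{R1}, since those remarks rest on \refT{Tcomp}, which in turn uses \refT{TK1}.)

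The reverse direction is where you diverge from the paper, and where there is a genuine gap. Your primary route via the kernel coupling theorem you yourself flag as delicate, and rightly so: the available equivalence results do not hand you a disconnecting set in the original space $\cS$ without substantial extra work. Your fallback is closer in spirit to the paper but still incomplete. You assert that when $\gG$ is disconnected, ``the latent variables $X_i\in\cS$ split almost surely into at least two infinite classes with no cross-edges''; but the component partition of $\goow$ depends on the edge coin-flips as well as on the $X_i$, so it is not a priori measurable with respect to $(X_i)_i$ alone, and there is no off-the-shelf de~Finetti statement that converts it into a measurable subset of $\cS$. Turning an a.s.\ disconnection of $\goow$ into a disconnection of $W$ is precisely the hard content here, not a formality.

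The paper sidesteps this transfer problem entirely by proving the contrapositive. It shows directly (\refL{LW1}) that if $W$ is connected then $\goow$ is a.s.\ connected, by building, for any two given vertices, a path between them using the neighbourhood operator $\nn(A)=\bigl\{x:\int_A W(x,y)\dd\mu(y)>0\bigr\}$ together with the fact (\refL{L0}) that for connected $W$ one has $\mu\bigl(\bigcup_{n\ge1}\nn^n(A)\bigr)=1$ whenever $\mu(A)>0$. Combined with the easy \refL{L3} ($\gG$ disconnected $\Rightarrow$ $\goog$ a.s.\ disconnected), this yields $W$ connected $\Rightarrow$ $\gG$ connected without ever needing to pull a disconnecting set back to $\cS$. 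This is the missing idea in your plan.
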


As said above, the representation is not unique, but the theorem implies
that all representing kernels are connected or disconnected simultaneously.

\begin{example}
  \label{Equasi}
Let $\gG_p$ be the graph limit given by the constant kernel
$W(x,y)=p$, for some $p\in\oi$ and some \csmu. (The graph limit in this
case depends on $p$ only, as a consequence of \eqref{t} below, which
yields $t(F,\gG_p)=p^{e(F)}$.)
By \refT{TK1}, $\gG_p$ is connected for $p>0$. 
(These graph limits are the limits of quasi-random sequences of
graphs \cite{ChungGW:quasi}, see \cite{LSz}.)
\end{example}

\begin{theorem}\label{TK2}
    Let\/ $0\le m\le\infty$ and let 
$\seq{\gG} m$ be graph limits and
$\seq{\ga} m\in\aaa$.
Suppose that, for
  each $i$, the graph limit $\gG_i$ is represented by a kernel $W_i$
  on a probability space \csmui. 
Then $\oplusm\ga_i \gG_i$ is represented by $\oplusm\ga_iW_i$.
\end{theorem}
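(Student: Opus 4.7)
The plan is to prove $\gG_{\oplusm\alpha_iW_i}=\oplusm\alpha_i\gG_i$ by comparing homomorphism densities $t(F,\cdot)$ on both sides for every finite graph $F$; since these densities separate points of $\cuq$, this identifies the two graph limits. Concretely, I would show that for every $F$ with connected components $F_1,\dots,F_c$,
\begin{equation*}
t\bigpar{F,\oplusm\alpha_iW_i}=\prod_{j=1}^c\sum_{i=1}^m\alpha_i^{v(F_j)}t(F_j,W_i),
\end{equation*}
and analogously with each $W_i$ replaced by $\gG_i$; since $t(F_j,W_i)=t(F_j,\gG_i)$ by assumption, the two formulas agree.

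The kernel-side identity is immediate from the integral definition. On $(\cS,\mu)=(\unionq\cS_i,\sum_i\alpha_i\mu_i)$, the kernel $W=\oplusm\alpha_iW_i$ vanishes off the diagonal blocks, so a map $V(F)\to\cS$ contributes nonzero mass to $t(F,W)$ only when every connected component of $F$ is mapped entirely into a single block. Splitting the integral over such block-assignments and applying Fubini produces the factor $\alpha_i^{v(F_j)}t(F_j,W_i)$ for component $F_j$ placed in block $i$. The series in $i$ is absolutely convergent even when $m=\infty$ since $\alpha_i^{v(F_j)}\le\alpha_i$, $t(F_j,W_i)\le1$, and $\sum_i\alpha_i\le1$; the deficient case is handled by the $W_0=0$ convention in \refD{DWsum}(ii), which contributes $0$ to every factor corresponding to an $F_j$ containing an edge.

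The main obstacle is the analogous identity on the graph-limit side, that $\oplusm\alpha_i\gG_i$ as defined in \refS{Sdef} satisfies $t(F,\oplusm\alpha_i\gG_i)=\prod_j\sum_i\alpha_i^{v(F_j)}t(F_j,\gG_i)$. For $m=2$ this can be read off from \refT{TA1}: choose finite graphs $H_n\to\gG_1$ and $H_n'\to\gG_2$ with $v(H_n)/(v(H_n)+v(H_n'))\to\alpha$; a direct count gives an analogous formula for $t(F,H_n\oplus H_n')$ with weights $(v(H_n)/v(H_n\oplus H_n'))^{v(F_j)}$ and $(v(H_n')/v(H_n\oplus H_n'))^{v(F_j)}$ (the error from non-injective maps is $o(1)$), and passing to the limit using \refT{TA1} yields the identity for $m=2$. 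Finite $m$ then follows by induction using associativity of direct sums, and $m=\infty$ by truncation, appealing to the same absolute-convergence bound. I expect this groundwork to be recorded explicitly in \refS{Sdef}.
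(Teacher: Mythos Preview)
Your approach is correct and shares the paper's core idea: verify that $t(F,\cdot)$ agrees on both sides. However, the paper's argument is considerably shorter because it exploits the fact that \refD{Dsum} (via \refT{T1}) \emph{defines} $\oplusm\ga_i\gG_i$ precisely by the formula \eqref{t1b}, namely $t(F,\oplusm\ga_i\gG_i)=\sumim\ga_i^{v(F)}t(F,\gG_i)$ for $F\in\cucx$. Thus the ``graph-limit side'' requires no work at all: one restricts to connected $F\in\cucx$, computes $t(F,\gG_W)=\sumim\ga_i^{v(F)}t(F,\gG_{W_i})$ directly from the integral \eqref{t} (exactly your block argument, but for connected $F$ one obtains a single sum rather than a product of sums), and observes that this matches \eqref{t1b}. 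Your detour through \refT{TA1}, induction on $m$, and truncation to establish the graph-limit side is unnecessary --- that is essentially the content of the proof of \refT{T1}, already carried out in \refS{Sdef}. Restricting to connected test graphs, justified by \refL{LUC}, is what buys the simplification.
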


\subsection{Random graphs}
A graph limit $\gG\in\cuoo$ defines an infinite random graph $\goog$,
which has vertex set $\bbN=1,2,\dots$ and is uniquely determined in
the sense that its distribution is. (Again, see \refS{Slimit}.)
Taking the subgraph induced by $[n]\=\set{1,\dots,n}$ we obtain finite
  random graphs $\gng$, $n=1,2,\dots$. These random graphs have the
  property that $\gng\to\gG$ in $\cuq$ a.s.

For the infinite random graph $\goog$, connectedness is equivalent to
connectedness of $\gG$.

\begin{theorem}
  \label{TR1}
Let\/ $\gG\in\cuoo$.
\begin{romenumerate}
  \item
If\/ $\gG$ is connected, then $\goog$ is \as{} connected.
  \item
If\/ $\gG$ is disconnected, then $\goog$ is \as{} disconnected.
\end{romenumerate}
\end{theorem}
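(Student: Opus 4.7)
The plan is to pass to the kernel representation of $\gG$: pick a kernel $W$ on \csmu{} with $\gG=\ggw$ and realize $\goog$ by sampling i.i.d.\ $X_1,X_2,\ldots\sim\mu$ and then, conditionally on the $X_i$, independently placing each edge $ij$ with probability $W(X_i,X_j)$. By \refT{TK1}, connectedness of $\gG$ is equivalent to connectedness of $W$, so the entire argument can be carried out at the level of the kernel.

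For part (ii), suppose $W$ is disconnected. The trivial case $W=0$ a.e.\ gives a graph a.s.\ without edges. Otherwise there is $A\subset\cS$ with $0<\mu(A)<1$ and $W=0$ a.e.\ on $A\times(\cS\setminus A)$. Set $V'=\{i:X_i\in A\}$ and $V''=\{i:X_i\notin A\}$; the strong law of large numbers makes both sets a.s.\ infinite, and for each of the countably many pairs with $i\in V'$, $j\in V''$ the edge probability $W(X_i,X_j)$ vanishes a.s.\ (the joint law of such $(X_i,X_j)$ is absolutely continuous with respect to $\mu\times\mu$ on $A\times A^c$, where $W$ is a.e.\ zero). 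So a.s.\ no edge crosses the partition and $\goog$ is disconnected.

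For part (i), assume $W$ connected, in particular $W\not\equiv 0$. Let $W^{(m)}$ denote the $m$-fold kernel convolution with $W^{(1)}=W$ and $W^{(m+1)}(x,y)=\int W(x,z)W^{(m)}(z,y)\dd\mu(z)$. The heart of the proof is a \emph{reachability lemma}: for $\mu\times\mu$-a.e.\ $(x,y)$, some $W^{(m)}(x,y)>0$. To prove the lemma, fix $y$ and consider $A_y=\{x:W^{(m)}(x,y)>0\text{ for some }m\}$. If $x\notin A_y$ then $W(x,z)=0$ for $\mu$-a.e.\ $z\in A_y$ (otherwise some $W^{(m+1)}(x,y)$ would be positive), so $W=0$ a.e.\ on $A_y^c\times A_y$; connectedness of $W$ then forces $\mu(A_y)\in\{0,1\}$. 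A Fubini and symmetry argument, exploiting $W\not\equiv 0$, finally shows $\{y:\mu(A_y)=0\}$ is $\mu$-null.

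Given the lemma, fix vertices $i\neq j$ and condition on $X_i,X_j$; with probability one there is a (random) $m$ with $W^{(m)}(X_i,X_j)>0$. Partition the remaining vertices into disjoint $(m-1)$-tuples $T_1,T_2,\ldots$ and let $E_l$ be the event that the vertices of $T_l$ support a length-$m$ path from $i$ to $j$. Different $T_l$ involve disjoint $X$-variables and disjoint edge indicators, so the $E_l$ are conditionally independent given $(X_i,X_j)$, each with conditional probability $W^{(m)}(X_i,X_j)>0$. The second Borel--Cantelli lemma produces such a path a.s., so $i$ and $j$ lie in the same component of $\goog$ a.s.; a countable union over pairs shows $\goog$ is a.s.\ connected. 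The main obstacle is the reachability lemma, specifically ruling out $\mu\{y:\mu(A_y)=0\}>0$: connectedness of $W$ only constrains sets on which $W$ vanishes from both sides, so one must additionally use symmetry of $W$ together with the hypothesis $W\not\equiv 0$ to handle a potential ``column of zeros'' in $W$.
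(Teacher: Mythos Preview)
Your plan is sound and parallels the paper's, though the packaging differs. For (ii) the paper's \refL{L3} works directly from the definition $\gG=\agg$, building a direct-sum kernel via \refT{TK2}; you instead invoke \refT{TK1} to get a disconnected $W$ and argue from the witnessing set $A$ --- the resulting vertex partition is the same. For (i), your kernel convolutions $W^{(m)}$ are the pointwise counterpart of the paper's set operator $\nn$ from \refL{L0} (indeed $\nn^m(A)=\{x:\int_A W^{(m)}(x,y)\dd\mu(y)>0\}$), and your reachability lemma is essentially the pointwise version of \refL{L0}\ref{l0e}. The real difference is in extracting a path: the paper (\refL{LW1}) builds it sequentially, adding one new vertex at a time to step down through $\nn^{n}(\nnx),\nn^{n-1}(\nnx),\ldots$, whereas you produce all $m-1$ intermediate vertices at once using disjoint tuples and Borel--Cantelli. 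Your route is arguably tidier once the lemma is in hand.

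One small gap to patch: the Borel--Cantelli step needs $m\ge2$, since for $m=1$ the only candidate ``path'' is the single edge $ij$ itself and there are no independent trials. Your reachability lemma as stated may force $m=1$ at some pairs (for the bipartite kernel on two atoms with $W(a,a)=W(b,b)=0$ and $W(a,b)=1$, one has $W^{(2)}(a,b)=0$ while $W(a,b)>0$). The fix is immediate: for a.e.\ $x$ one has $\mu(\nnx)>0$ (this is your ``$W\not\equiv0$ plus symmetry'' observation, i.e.\ \refL{L0}\ref{l0a}), and you have shown $\mu(A_y)=1$ for a.e.\ $y$; hence for a.e.\ $(x,y)$ there is some $m\ge1$ and a positive-measure set of $z$ with $W(x,z)>0$ and $W^{(m)}(z,y)>0$, giving $W^{(m+1)}(x,y)>0$. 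So one may always take $m\ge2$ in the path-finding step.
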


For the (finite) random graph $\gng$, 
we cannot expect that connectedness is determined by the connectedness
of $\gG$, not even asymptotically.
For example, it is easy to see that if $W=p>0$ is constant as in
\refE{Equasi}, then 
$\gng=G(n,p)$ is connected \whp{}, \ie, with probability tending to
1 as \ntoo,
(this is one of the earliest results in random graph theory
\cite{Gilbert}, \cite{Bollobas}); 
on the other hand,
if $W(x,y)=x^2y^2$ on \oil, then
$\gng$ \whp{} has many 
isolated vertices
(at least $n^{1/4}$, considering only those
with $X_i<1.1n^{-3/4}$ in the construction in \refS{Slimit}).
Instead, there is a corresponding result on the
asymptotic size of the largest component. 
Let $\cci(G)$ denote the largest component of a
graph $G$ (splitting ties arbitrarily).

\begin{theorem}
  \label{TR2}
Let\/ $\gG\in\cuoo$.
\begin{romenumerate}
  \item
$\gG$ is connected if and only if\/ $|\cci(\gng)|/n\pto1$ as \ntoo.
\item
More precisely, for every $\gG$,
$|\cci(\gng)|/n\pto\rho$ for some $\rho\in\oi$;
if\/ $\gG$ is connected, then $\rho=1$, while 
if\/ $\gG$ is disconnected, then $0\le\rho<1$. Furthermore, $\rho=0$ if
and only if\/ $\gG=\noll$ (in which case $\gng=E_n$, the empty graph).
In fact, if\/ $\gG$ 
has the decomposition $\oplusM\ga_i\gG_i$ into components as
in \refT{Tcomp},
then
$\rho=\max_i\set{\ga_i}$ (with $\rho=0$ if $M=0$). 
\end{romenumerate}
\end{theorem}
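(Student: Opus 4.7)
The plan is first to establish part (ii); part (i) then follows because by \refR{R1} a graph limit is connected if and only if its decomposition has $M=1$ and $\ga_1=1$, equivalently $\max_i\ga_i=1$.

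The key lemma is the connected case: if $\gG\in\cuoo$ is connected, then $|\cci(\gng)|/n\to 1$ almost surely. Since $\gng\to\gG$ a.s., \refT{Tcut}(i) gives, for each $\gd>0$, an $\eps=\eps(\gd)>0$ such that a.s.\ for all large $n$, every partition $V(\gng)=V'\cup V''$ with $|V'|,|V''|\ge\gd n$ has $e(V',V'')\ge\eps n^2$. On the other hand, the components of $\gng$ partition its vertex set with no edges across. If $|\cci(\gng)|/n<1-2\gd$, I would produce a forbidden partition as follows: if $|\cci|\ge\gd n$, take $V'=\cci$ and $V''$ the rest; otherwise all components have size $<\gd n$, and greedily grouping them yields a partition into two parts each of size in $[\gd n,(1-\gd)n]$ (for $\gd<1/4$, say). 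Either way $e(V',V'')=0$, contradicting \refT{Tcut}(i).

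For general $\gG$, I would apply \refT{Tcomp} to write $\gG=\oplusM\ga_i\gG_i$ with each $\gG_i$ connected, and \refT{TK2} to represent $\gG$ by $W=\oplusM\ga_iW_i$ on $\csmu=\unionq\csmui$ (adjoining a block $\cS_0$ with $W_0\equiv 0$ if the sum is deficient). In the standard construction of $\gng=G(n,W)$ from i.i.d.\ points $X_1,\dots,X_n\sim\mu$, each $X_k$ lies in $\cS_i$ with probability $\ga_i$, and since $W$ vanishes off the diagonal blocks, $\gng$ is the disjoint union of its block-induced subgraphs. Writing $N_i=|\set{k\le n:X_k\in\cS_i}|$, the block-$i$ subgraph is distributed as $G(N_i,\gG_i)$, so
\begin{equation*}
|\cci(\gng)|=\max_i|\cci(G(N_i,\gG_i))|.
\end{equation*}
By the strong law $N_i/n\asto\ga_i$; for $\ga_i>0$ this forces $N_i\to\infty$ a.s., and the key lemma applied to the block-$i$ subgraph of $G(\infty,W)$ (which is distributed as $G(\infty,\gG_i)$) gives $|\cci(G(N_i,\gG_i))|/N_i\asto 1$, and hence $|\cci(G(N_i,\gG_i))|/n\asto\ga_i$. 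The $\cS_0$-block contributes only isolated vertices.

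The main obstacle I foresee is the case $M=\infty$, where one must rule out that the supremum is realized by a block with vanishing weight. Given $\eta>0$, I would pick $K$ with $\sum_{i>K}\ga_i<\eta$; then
\begin{equation*}
\max_{i>K}|\cci(G(N_i,\gG_i))|/n\le\sum_{i>K}N_i/n\pto\sum_{i>K}\ga_i<\eta,
\end{equation*}
while $\max_{i\le K}|\cci(G(N_i,\gG_i))|/n\asto\max_{i\le K}\ga_i$. Letting $\eta\to 0$ yields $|\cci(\gng)|/n\pto\max_i\ga_i=:\rho$. The remaining assertions of (ii)---namely $\rho=0\iff\gG=\noll$ (by \refR{R0}) and $\rho<1$ when $\gG$ is disconnected (since then $\ga_1<1$ by \refR{R1})---then follow directly from the decomposition.
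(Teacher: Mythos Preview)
Your proof is correct, and the overall architecture---decompose $\gG$ via \refT{Tcomp}, realize $\gng$ as a disjoint union over the blocks $\cS_i$, apply the connected case blockwise, and truncate the tail to handle $M=\infty$---matches the paper's proof closely. The genuine difference lies in how you establish the key lemma for connected $\gG$. The paper (\refL{LTR2}) argues via \refT{TR1}: since $\goog$ is \as{} connected, the probability that vertices $1$ and $2$ lie in the same component of $\gng$ increases to $1$, and a first-moment computation then forces $\E\bigpar{1-|\cci(\gng)|/n}\to 0$, giving convergence in probability. You instead invoke \refT{Tcut}(i) on the \as{} convergent sequence $(\gng)_n$: a largest component smaller than $(1-2\gd)n$ would yield a balanced cut with no edges across, which \refT{Tcut}(i) forbids for large $n$. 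Your route is more combinatorial, avoids the infinite random graph entirely, and in fact delivers almost-sure convergence rather than merely $\pto$; the paper's route is shorter and ties the result directly to the structure of $\goog$, which it has already analyzed in detail.
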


\begin{remark}
\citet{SJ178} 
study the component sizes in the much sparser random graphs with edge
probabilities $O(1/n)$ obtained from $G(n,\gG)$
by randomly deleting edges, keeping each edge only with probability
$c/n$ for some constant $c$ (the range where a component of order $n$
appears), and 
  \citet{BBCR} extend this to a deterministic sequence of graphs
  $G_n\to\gG$; these much more intricate results easily imply (i).
\end{remark}

We can also give a complete description of the components of $\goog$.
We let $G\restr V$, where $G$ is a graph and
$V\subseteq V(G)$, denote the induced
subgraph of $G$ with vertex set $V$. (We allow here the possibility that
$V=\emptyset$, when the induced subgraph has no vertices. Such
cases may be ignored in (ii) in the theorem below, since they only occur for
small $n$, and thus do not affect the limit.) 

\begin{theorem}\label{TGcomp}
  Let\/ $\gG\in\cuoo$.
Let the random infinite graph $\goog$ have components $G_j$,
$j=1,\dots,N$ (with $1\le N\le \infty$), listed in increasing order of
the smallest element, say, and let $V_j\=V(G_j)$ be the vertex set of
$G_j$.
Then, \as, the following hold for every component $G_j$.
\begin{romenumerate}
  \item\label{tga}
$G_j$ is either infinite or an isolated vertex. I.e.,
  $|V_j|=v(G_j)\in\set{1,\infty}$. 
\item\label{tgb}
If\/ $G_j$ is infinite, then $G_j\restr{V_j\cap[n]}\to\gG_j'$ as \ntoo{}
for some (random) $\gG'_j\in\cuoo$.
\item\label{tgc}
The asymptotic density $\nu_j\=\lim_\ntoo|V_j\cap\nnn|/n$ exists;
furthermore, if\/ $|V_j|=\infty$, then $\nu_j>0$ (and trivially
conversely).
\item\label{tgd}
Consider the (random, and finite or infinite) sequence 
$\set{(\gG'_j,\nu_j)}_{j=1}^N$ defined by \ref{tgb} and \ref{tgc}.
Let\/ $\gG$ have the decomposition $\oplusM\ga_i\gG_i$ into components as
in \refT{Tcomp}.
Then, the subsequence \set{(\gG'_j,\nu_j):\nu_j>0} equals a permutation
of $\set{(\gG_i,\ga_i)}_{i=1}^M$.
\item\label{tge}
The set\/ $V_0\=\bigcup_{|V_j|=1}V_j$
of all isolated vertices in $\goog$ has \as{} a density 
$\nu_0\=\lim_\ntoo|V_0\cap\nnn|/n$, and
$\nu_0=\ga_0\=1-\sumiM\ga_i$ with $\ga_i$ as in \ref{tgd}.
Furthermore, $V_0=\emptyset\iff\ga_0=0\iff\seq\ga M\in\aaaq$, \ie, the
direct sum $\oplusM\ga_i\gG_i$ is complete.
\end{romenumerate}
\end{theorem}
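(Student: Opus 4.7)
The plan is to use the kernel representation of $\gG$ together with the natural i.i.d.\ construction of $\goog$. By \refT{Tcomp}, decompose $\gG=\oplusM\ga_i\gG_i$ into its components, with every $\ga_i>0$; by \refT{TK1}, for each $i\ge1$ we may choose a \emph{connected} kernel $W_i$ on $\csmui$ representing $\gG_i$, and by \refT{TK2} the direct sum $W\=\oplusM\ga_iW_i$ on $\csmu$ then represents $\gG$. In the deficient case we take $W_0=0$ on $(\cS_0,\mu_0)$ with weight $\ga_0\=1-\sumiM\ga_i$. Realize $\goog$ via this $W$ using i.i.d.\ points $X_1,X_2,\dots\sim\mu$ and independent Bernoulli edges of probability $W(X_j,X_k)$, and set $V_i\=\set{j\ge1:X_j\in\cS_i}$.

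Since $W$ vanishes off the diagonal blocks, \as{} there are no edges of $\goog$ between distinct $V_i$'s. For $i\ge1$ one has $\mu(\cS_i)=\ga_i>0$, so by the strong law of large numbers $|V_i\cap\nnn|/n\asto\ga_i$; in particular $V_i$ is \as{} infinite. Conditionally on the partition $(V_i)$, the positions $(X_j)_{j\in V_i}$ are i.i.d.\ with law $\mu_i$, so relabelling $V_i$ in increasing order exhibits the induced subgraph on $V_i$ as a copy of the infinite random graph $G(\infty,\gG_i)$. Since $\gG_i$ is connected, \refT{TR1}(i) shows that this copy is \as{} connected; hence each $V_i$ ($i\ge1$) is exactly one component of $\goog$, and $G_j\restr{V_j\cap\nnn}$ is, up to the relabelling, the initial segment $G(|V_i\cap\nnn|,\gG_i)$, which converges \as{} to $\gG_i$ in $\cuq$. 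The indices forming $V_0$ sit over $\cS_0$ where $W_0=0$, so each is \as{} an isolated singleton component, and $|V_0\cap\nnn|/n\asto\ga_0$.

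Putting this together, \ref{tga} holds because every component of $\goog$ is either an infinite $V_i$ or a singleton in $V_0$; \ref{tgb} follows from the convergence $G(k,\gG_i)\to\gG_i$ \as{} as $k\to\infty$; \ref{tgc} and \ref{tge} follow from the strong law applied block-by-block; and \ref{tgd} is the bijection $j\leftrightarrow i$ matching the infinite components $V_j$ with the positive-weight summands $\gG_i$. The main obstacle is handling the case $M=\infty$: one must work on the countable intersection of the relevant \as{} events (one strong-law event and one connectedness event per block) and verify that the identifications of components, limits, and densities hold simultaneously there. Once this bookkeeping is in place, the block structure of $W$ combined with \refT{TR1} makes the full conclusion essentially automatic.
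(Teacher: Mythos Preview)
Your proposal is correct and follows essentially the same route as the paper: represent $\gG$ via the direct-sum kernel $W=\oplusM\ga_iW_i$ obtained from the (existence part of the) decomposition in \refT{Tcomp} together with \refT{TK1} and \refT{TK2}, partition $\bbN$ by the blocks $\cS_i$, use the strong law for the densities, and invoke \refT{TR1} to see that each block with $i\ge1$ is a single connected component while the $\cS_0$-block yields isolated vertices. The paper's argument is identical in structure and in its use of the auxiliary results; your explicit remark about taking a countable intersection of a.s.\ events when $M=\infty$ is a point the paper leaves implicit.
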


The components of the infinite random graph $\goog$ define a random
partition of $\bbN$. Since $\goog$ is exchangeable (\ie, its
distribution is invariant under permutations of $\bbN$, see further
\cite{SJ209}), this yields an exchangeable random partition 
$\pig$ of $\bbN$.
\citet{Kingman} has shown, see also \citet[Section 2.3]{Bertoin}, 
that the blocks of an exchangeable
random partition \as{} have asymptotic densities, 
so an exchangeable random partition
has a (generally random) sequence $(p_i)_1^ \infty$
of asymptotic densities of the blocks in the partition; 
we assume that these are ordered  in decreasing order 
(possibly ignoring or adding 0's), and thus
$(p_i)_1^ \infty\in\aaaz\=\bigset{(p_i)_1^\infty\in\aaa:p_1\ge
  p_2\ge\dots}$.
Moreover, 
the (distribution of) $(p_i)_1^ \infty$ determines the
(distribution of) the \exch{} random partition and every sequence 
$(p_i)_1^ \infty\in\aaaz$ corresponds to an \exch{} random partition of
$\bbN$; 
the \exch{} random partition can be constructed from
$(p_i)_1^ \infty$  by the paint-box construction
\cite[Section 2.3]{Bertoin}.

\begin{theorem}\label{TR3}
  Let the graph limit $\gG$ have the decomposition $\oplusM\ga_i\gG_i$
  into components as in \refT{Tcomp}. Then the 
sequence $(p_i)_1^ \infty$
of asymptotic densities of the \exch{}
partition $\pig$ equals \as{} $\seq\ga M$ arranged in decreasing order,
  and extended by $0$'s if $M<\infty$.
\end{theorem}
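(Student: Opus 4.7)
The plan is to extract Theorem~\ref{TR3} almost directly from Theorem~\ref{TGcomp}, which has already done the structural work of identifying the components of $\goog$ with the summands $\gG_i$ in the canonical decomposition.

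First I would recall the setup: by construction, the blocks of $\pig$ are the vertex sets $V_j=V(G_j)$ of the components of $\goog$. Kingman's theorem \cite[Section 2.3]{Bertoin} asserts that, a.s., each block has an asymptotic density, and the decreasing rearrangement $(p_i)_1^\infty$ of these densities is well-defined (padding by $0$'s as needed). Thus my task is to identify this decreasing rearrangement with that of $\seq\ga M$.

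Next I would observe that Theorem~\ref{TGcomp}(iii) already provides asymptotic densities $\nu_j=\lim_n |V_j\cap [n]|/n$ for every component $G_j$; these must then coincide a.s.\ with the densities produced by Kingman's theorem, since both are limits of the same expression. By Theorem~\ref{TGcomp}(iii), $\nu_j>0$ iff $|V_j|=\infty$, so every block of positive density corresponds to an infinite component. The remaining components are isolated vertices (by Theorem~\ref{TGcomp}(i)), each contributing a singleton block of density $0$.

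Finally, I would invoke Theorem~\ref{TGcomp}(iv): the multiset of pairs $\{(\gG'_j,\nu_j):\nu_j>0\}$ is a.s.\ a permutation of $\{(\gG_i,\ga_i)\}_{i=1}^M$. Projecting onto the second coordinate, the multiset of \emph{positive} asymptotic densities of blocks of $\pig$ equals a.s.\ $\seq\ga M$ (as multisets, with the same multiplicities). Rearranging in decreasing order and padding with $0$'s yields $(p_i)_1^\infty$ on the one side and $\seq\ga M$ arranged in decreasing order (padded with $0$'s) on the other, giving the claimed equality. There is really no obstacle here since the deep work is already absorbed into Theorem~\ref{TGcomp}; the only minor point to verify carefully is that the singleton blocks from isolated vertices, which may be infinitely many, do not perturb the decreasing density sequence because each contributes density $0$, and Kingman's framework accommodates arbitrarily many zero-density blocks via the padding convention.
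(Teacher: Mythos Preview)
Your proposal is correct and follows essentially the same approach as the paper: the paper's proof is the single sentence ``A corollary of \refT{TGcomp}\ref{tgc}\ref{tgd}, since the partition is $\set{V_j}$,'' and your argument is simply an explicit unpacking of that corollary, invoking parts (i), (iii), and (iv) of \refT{TGcomp} exactly as intended.
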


In this case the asymptotic densities thus are (\as) deterministic.

It is also interesting to consider the component of $\goog$ containing
a given vertex, which we by symmetry can take to be 1.

\begin{theorem}\label{TR4}
Let\/ $\gG\in\cuoo$, and suppose that 
$\gG$ has the decomposition $\oplusM\ga_i\gG_i$
  into components as in \refT{Tcomp}. 
\begin{romenumerate}
\item
Let\/ $G_1$ be the component of $\goog$ that contains vertex $1$.
Then $|G_1|$ is \as{} either $1$ or $\infty$, with
$\P(|G_1|=\infty)=\sumiM\ga_i$ and $\P(|G_1|=1)=\ga_0\=1-\sumiM\ga_i$.
\item
Let\/ $H$ be the infinite random graph with $V(H)=\bbN$ obtained from
$G_1$ by 
relabelling the vertices in increasing order when $|G_1|=\infty$, and
simply taking $H\=E_\infty$, the infinite graph on $\bbN$ with no
edges, when $|G_1|=1$.
Then $H$ is an \exch{} infinite random graph, and $H$ has the same
distribution as the mixture $\sumioM\ga_i\cL(\googi)$, where
$\gG_0=\noll$.
In other words, for any measurable set $A$ of infinite graphs on
$\bbN$, 
$\P(H\in A)=\sumioM \ga_i\P(\googi\in A)$.  
\end{romenumerate}
\end{theorem}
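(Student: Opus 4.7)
The plan is to deduce both parts of \refT{TR4} from \refT{TGcomp} and \refT{TR3}, together with the general principle that $\goog$ is characterized in distribution as the \exch{} infinite random graph on $\bbN$ whose sequence of finite induced subgraphs converges a.s.\ to the deterministic graph limit $\gG$.

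For part (i), I would first note that \refT{TGcomp}\ref{tga} gives $|G_1|\in\set{1,\infty}$ a.s. Since $\pig$ is an \exch{} partition of $\bbN$, the paint-box representation recalled just before \refT{TR3} implies that the probability that vertex~$1$ belongs to a given block of $\pig$ equals the asymptotic density of that block. Combining \refT{TR3} with \refT{TGcomp}\ref{tge}, the densities of the infinite blocks are $\ga_1,\dots,\ga_M$ while the singletons have total density $\ga_0$, so
\begin{equation*}
\P\bigl(|G_1|=\infty\bigr)=\sumiM\ga_i
\qquad\text{and}\qquad
\P\bigl(|G_1|=1\bigr)=\ga_0.
\end{equation*}

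For part (ii), I would condition on which summand of $\gG$ contains vertex~$1$. Set $E_0\=\set{|G_1|=1}$ and, for $i=1,\dots,M$, let $E_i$ be the event that $G_1$ is infinite and its limit, as supplied by \refT{TGcomp}\ref{tgb} and identified via the bijection in \refT{TGcomp}\ref{tgd}, equals $\gG_i$; by (i) and \refT{TGcomp}\ref{tgd} we have $\P(E_i)=\ga_i$ for $0\le i\le M$. On $E_0$ the construction gives $H=E_\infty=G(\infty,\noll)$, which matches the $i=0$ term of the mixture since $\gG_0=\noll$. On $E_i$ with $i\ge1$, $H$ is obtained from $G_1$ by relabelling its vertices in increasing order; exchangeability of $\goog$ transfers (conditionally on $E_i$) to exchangeability of this relabelled graph on $\bbN$, while \refT{TGcomp}\ref{tgb} shows that its finite induced subgraphs converge a.s.\ to the deterministic limit $\gG_i$. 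Invoking the characterization of $\googi$ as the unique (in distribution) \exch{} infinite random graph on $\bbN$ whose a.s.\ limit is $\gG_i$ then yields $\cL(H\mid E_i)=\cL(\googi)$, and summing over $i$ produces the mixture $\cL(H)=\sumioM\ga_i\cL(\googi)$; exchangeability of $H$ itself follows since it is a mixture of exchangeable laws.

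The hard part will be the conditional exchangeability step on $E_i$: one must check that restricting $\goog$ to the random infinite vertex set of $G_1$ and relabelling in increasing order yields, under the conditional law on $E_i$, a graph that is both exchangeable on $\bbN$ and has the correct a.s.\ limit $\gG_i$ rather than some other summand $\gG_j$. Exchangeability rests on the invariance of $\goog$ under permutations of $\bbN$ that fix the component of vertex~$1$ setwise; identification of the limit is a bookkeeping exercise using the bijection between $\set{(\gG_j',\nu_j):\nu_j>0}$ and $\set{(\gG_i,\ga_i)}_{i=1}^M$ from \refT{TGcomp}\ref{tgd}. With both secured, the identification $\cL(H\mid E_i)=\cL(\googi)$ is forced by the uniqueness property of $\googi$, and the mixture formula follows immediately.
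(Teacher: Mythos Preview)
Your argument for part~(i) via the paint-box representation and \refT{TR3} is correct, though more circuitous than the paper's route. For part~(ii), however, your approach has genuine gaps that you yourself flag but do not close, and the most natural way to close them leads back to the paper's method.

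The paper's proof is a two-line corollary of the \emph{construction} inside the proof of \refT{TGcomp}, not merely its statement. There one has the explicit kernel $W=\oplusM\ga_iW_i$ on $\cS=\bigcup_{i\ge0}\cS_i$ and the \iid\ sequence $(X_k)$; the event $\set{X_1\in\cS_i}$ has probability $\ga_i$, and on this event the component of vertex~$1$ is $\qg_i$, whose relabelled version is distributed as $G(\infty,W_i)=\googi$ because the subsequence $(X_k:X_k\in\cS_i)$ is \iid\ with law $\mu_i$. Both parts of \refT{TR4} drop out immediately.

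Your abstract route tries to avoid this construction, but then you must manufacture the events $E_i$ and establish $\P(E_i)=\ga_i$ and $\cL(H\mid E_i)=\cL(\googi)$ from scratch. Several difficulties arise. First, defining $E_i$ via ``the bijection in \refT{TGcomp}\ref{tgd}'' is ill-posed when the sequence $(\gG_i,\ga_i)_{i=1}^M$ has repeated pairs, which the paper explicitly permits. Second, even granting a clean definition, the claim $\P(E_i)=\ga_i$ does not follow from part~(i) and \refT{TGcomp}\ref{tgd} alone: those give the multiset of component data $(\gG_j',\nu_j)$, not which block receives vertex~$1$ with what probability. Third, and most seriously, the conditional exchangeability of $H$ on $E_i$---which you correctly identify as the crux---is not established by the hint about permutations fixing the component of~$1$ setwise; the increasing-order relabelling interacts with such permutations nontrivially. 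Finally, the ``uniqueness characterization'' of $\googi$ you invoke (exchangeable with \as\ deterministic limit $\gG_i$ implies $\eqd\googi$) is a substantial fact from the Aldous--Hoover theory in \cite{SJ209}, not stated in this paper.

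All of these gaps evaporate once one uses the kernel construction: the events $\set{X_1\in\cS_i}$ are well-defined and disjoint with the right probabilities, and the \iid\ structure of the subsequence delivers both exchangeability and the correct limit for free. Your abstract approach is not wrong in spirit, but completing it rigorously essentially reproduces the paper's argument anyway.
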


\begin{remark}\label{Rsparse}
  The graph limit theory has versions for bipartite graphs and
  directed graphs too, see \cite{SJ209}. We presume that the
  definitions and results in the present paper have analogues for
  these cases too, but we have not pursued this.
Furthermore, \citet{BR} have recently started to extend the theory to
  limits of sparse graphs. We do not know whether our results can be
  extended to that case.
\end{remark}


\section{Graph limits}\label{Slimit}

We summarize some basic facts about graph limits that we will use as follows
(using the notation of \cite{SJ209}), see \citet{LSz},
\citet{BCLi,BCLii} and \citet{SJ209} for details and further results.

If $F$ and $G$ are two graphs, let
$t(F,G)$ denote the probability that a random mapping $\phi:V(F)\to V(G)$
defines a graph homomorphism, \ie, that $\phi(v)\phi(w)\in E(G)$ when
$vw\in E(F)$. (By a random mapping we mean a mapping uniformly chosen
among all $v(G)^{v(F)}$ possible ones; the images of the vertices in
$F$ are thus independent and uniformly distributed over $V(G)$.)
The basic definition \cite{LSz,BCLi} is that a sequence $G_n$ of graphs
converges if $t(F,G_n)$ converges for every graph $F$;
we will use the version in \cite{SJ209} where we further assume
$v(G_n)\to\infty$.
More precisely, 
the (countable and discrete) set $\cU$
of all unlabelled graphs can be embedded as a dense subspace of a
compact metric space 
$\cuq$ such that a sequence 
$G_n\in\cU$ of graphs with $v(G_n)\to\infty$
converges in $\cuq$ to some limit $\gG\in\cuq$ if and only if $t(F,G_n)$
converges for every graph $F$. 
We let $\cuoo\=\cuq\setminus \cU$ be the
set of proper limit elements, and define $v(\gG)\=\infty$ for $\gG\in\cuoo$.
The functionals $t(F,\cdot)$
extend to continuous functions on $\cuq$, 
and an element $\gG\in\cuoo$ is determined by the numbers $t(F,\gG)$.
Hence,
$G_n\to\gG\in\cuoo$ if and
only if $v(G_n)\to v(\gG)=\infty$ and
$t(F,G_n)\to t(F,\gG)$ for every graph $F$.
(See \cite{BCLi,BCLii} for 
deep results giving
several other, equivalent, characterizations of
$G_n\to\gG$; for example the fact that $\cuq$ may be metrized by the
cut distance $\gdcut$ defined there.)

Graph limits $\gG\in\cuoo$ may be represented (non-uniquely)
by functions as follows \cite{LSz},
see also \cite{SJ209} for connections to the Aldous--Hoover
representation theory for exchangeable arrays \cite{Kallenberg:exch}.
(See \cite{Tao} and \cite{Austin} for related results.)

Let $(\cS,\mu)$ be an arbitrary probability space and let
$W:\cS\times\cS\to\oi$ be a kernel, \ie, a symmetric measurable function.
Let $X_1,X_2,\dots,$ be an \iid{} sequence of random elements of $\cS$
with common distribution $\mu$. Then there is a (unique) graph limit
$\ggw\in\cuoo$ such that, for every graph $F$, 
\begin{equation}\label{t}
  \begin{split}
  t(F,\ggw)
&=
\E \prod_{ij\in\E(F)} W(X_i,X_j) 
\\
&=
\int_{\cS^{v(F)}} \prod_{ij\in\E(F)} W(x_i,x_j) 
 \dd\mu(x_1)\dotsm \dd\mu(x_{v(F)}).	
  \end{split}
\end{equation}
Further,  for every $n\ge1$, let
$\gnw$ be 
the random graph with vertex set $[n]$
and edges 
obtained by, conditionally given $X_1,X_2,\dots, X_n$, 
taking an edge $ij$ with probability $W(X_i,X_j)$, (conditionally)
independently for all pairs $(i,j)$ with $i<j$.
Then the random graph $\gnw$ converges to $\ggw$ \as{} as \ntoo.

We may in this construction also take $n=\infty$,
with $[\infty]=\bbN$; this gives a random
infinite graph $\goow$. Note that $\gnw$ is the induced subgraph 
$\goow\restr{\nnn}$.

Every graph limit in $\cuoo$ equals $\ggw$ for some such kernel
$W:\cS\times\cS\to\oi$ on a suitable probability space $(\cS,\mu)$; in
fact \cite{LSz}, see also \cite{SJ209} and \cite{SJL5}, 
we can always choose $\cS=\oi$
equipped with Lebesgue measure $\gl$. (This choice of $(\cS,\mu)$ is the
standard choice, and often the only one considered, but we find it
useful in this paper to be more general.)
Note, however, that even if we restrict ourselves to $\oil$ only, the
representing function $W$ is in general not unique. 
It is trivial that replacing $W$ by $W'$ with $W'=W$
$\mu\times\mu$-\aex{} does not affect the integral in \eqref{t} and
thus not $\ggw$. 
It is equally obvious that
a measure-preserving change of variables will not affect $\gG$.
Moreover, this can be extended a little further,
and the complete characterization of functions $W$ yielding the same
$\ggw$ is rather subtle, see \cite{BR,BCLi,SJ209,Kallenberg:exch} for details.

In view of this non-uniqueness, we will thus distinguish between the
graph limits, being elements of $\cuoo$, and the functions $W$ that
represent them; \cf{} \refR{Rgraphon}.

Given a graph limit $\gG\in\cuoo$, it can, as just said, be represented
as $\ggw$ for some kernel $W$. Although $W$ is not unique, the
distribution of the random graph $\gnw$ is the same for all
representing $W$, for every $n\le\infty$; 
in fact, for every graph $F$ with vertex set $[k]$ and $k\le n$,
$\P(\gnw\supseteq F)=t(F,\gG)$.
Consequently, for every graph limit $\gG$
there is a well-defined random graph $\gng$ with $n$ vertices, for
every $n$ with $1\le n\le\infty$; this includes the case $n=\infty$
when $\goog$ is an infinite random graph.
(See further \citet[Theorem 7.1 and Corollary 5.4]{SJ209}; that paper
treats only the case $\cS=\oi$, but
the general case can be proved the same way or by first transferring to
$\oi$ as in \cite{SJL5}.) 

We thus have, for every graph $F$ with $V(F)=[k]$ where $1\le k\le n\le\infty$,
\begin{equation}
  \label{magnus}
\P(\gng\supseteq F)=t(F,\gG).
\end{equation}

If $W$ is a kernel representing $\gG$, then $\gnw\to\gG$ \as, as said above.
Hence, 
for every graph limit $\gG$,  
\begin{equation}
  \label{rex}
\gng\to\gG
\qquad
\text{\as, as \ntoo.}
\end{equation}
One corollary of this, or of \eqref{magnus}, is that
the distribution
of $\goog$ determines $\gG$: if $G(\infty,\gG_1)\eqd G(\infty,\gG_2)$
for two graph limits $\gG_1,\gG_2$, then $\gG_1=\gG_2$.

\begin{example}
  \label{Enoll}
Let $E_n$ be the empty graph with $V(E_n)=[n]$ and $E(E_n)=\emptyset$. Then
$t(F,E_n)=0$ for any $F$ with $e(F)>0$, while, as always, $t(F,E_n)=1$
when $e(F)=0$.
Hence the sequence $(E_n)_n$ converges in $\cuq$, and there is a graph
limit $\noll\in\cuoo$ such that $E_n\to\noll$; this graph limit is
characterized by
\begin{equation}
  t(F,\noll)=
  \begin{cases}
0,&e(F)\ge1,
\\
1,&e(F)=0.	
  \end{cases}
\end{equation}

The graph limit $\noll$ is represented by the kernel $W=0$ (on any probability
space). Hence, $G(n,\noll)=E_n$ \as, for every $n\le\infty$.
It is easily seen that 
$\noll=\ga\noll\oplus(1-\ga)\noll$ for any $\ga\in\oi$;
hence $\noll$ is disconnected.

We may call $\noll$ the \emph{empty} or \emph{trivial} graph limit;
nevertheless it is useful and important, as is seen in \refS{S1}.
\end{example}

\section{Connected test graphs suffice}\label{SFconn}

In \refS{Slimit}, we  defined graph limits and convergence to them using the
functionals $t(F,\cdot)$ where $F$ ranges over the set of all
(unlabelled) graphs. It turns out that it suffices to use connected
graphs $F$.
More precisely, 
let $\cuc\subset\cU$ be the set of all connected unlabelled graphs,
and let $\cucx\=\set{G\in\cuc:e(G)>0}=\cuc\setminus\set{K_1}$ be the
subset of unlabelled connected graphs with at least one edge.
As the next lemma shows, the functionals $t(F,\cdot)$ for $F\in\cucx$
are sufficient to characterize graph limits as well as convergence to
them.

\begin{lemma}
  \label{LUC}
A graph limit $\gG$ is uniquely determined by the numbers $t(F,\gG)$
for $F\in\cucx$.

Moreover, if $\gG_1,\gG_2,\dots\in\cuq$ is a sequence of graphs or
graph limits with $v(\gG_n)\to\infty$, 
and 
for every $F\in\cucx$,
$t(F,\gG_n)\to t_F$ as \ntoo{} for some number
$t_F\in\oi$, then $\gG_n\to\gG$, where $\gG\in\cuoo$ is the unique
graph limit with $t(F,\gG)=t_F$, $F\in\cucx$.
\end{lemma}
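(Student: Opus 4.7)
The strategy hinges on a simple multiplicativity identity. For finite graphs $F_1, F_2, G$, a uniformly random mapping $V(F_1 \oplus F_2) \to V(G)$ consists of two independent uniform mappings on $V(F_1)$ and $V(F_2)$, and it is a homomorphism iff each restriction is. Hence
\begin{equation*}
t(F_1 \oplus F_2, G) = t(F_1, G)\, t(F_2, G).
\end{equation*}
Since each $t(F, \cdot)$ is continuous on $\cuq$, this identity extends, by taking $G_n \to \gG$, to arbitrary $\gG \in \cuq$. Also $t(K_1, \gG) = 1$ trivially, since the random mapping out of a single vertex has no edge constraint to satisfy.

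Iterating the multiplicativity over the connected-component decomposition of a graph $F$ gives
\begin{equation*}
t(F, \gG) = \prod_i t(F_i, \gG),
\end{equation*}
where $F_i$ runs over the components of $F$; the $K_1$ components contribute factors of $1$, so only the components lying in $\cucx$ actually matter. Therefore the collection $\{t(F,\gG) : F \in \cucx\}$ determines $\{t(F,\gG) : F \text{ any graph}\}$, and by the characterization recalled in \refS{Slimit}, these in turn determine $\gG$. This proves the first assertion.

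For the convergence statement the plan is to exploit compactness of $\cuq$. Take any subsequence $(\gG_{n_k})$ that converges to some $\gG^* \in \cuq$; since $v(\gG_n) \to \infty$ precludes convergence to a finite graph, necessarily $\gG^* \in \cuoo$. Continuity of $t(F,\cdot)$ together with the hypothesis $t(F,\gG_n) \to t_F$ yields $t(F, \gG^*) = t_F$ for every $F \in \cucx$. By the uniqueness just established, every subsequential limit equals the same graph limit $\gG$, so the full sequence converges to $\gG$; the existence of such a $\gG$ drops out of this subsequence argument and needs no separate treatment.

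The argument is short; the only mildly delicate point is ensuring subsequential limits lie in $\cuoo$ rather than $\cU$, which is immediate from $v(\gG_n) \to \infty$. No real obstacle appears — the whole lemma reduces to the factorization of $t(\cdot, G)$ over disjoint unions, combined with continuity and compactness.
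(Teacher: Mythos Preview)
Your proof is correct and follows essentially the same approach as the paper: establish the multiplicativity $t(F_1\oplus F_2,\gG)=t(F_1,\gG)\,t(F_2,\gG)$ (first for graphs, then by continuity), use $t(K_1,\gG)=1$ to reduce to $\cucx$, and conclude convergence by the standard compactness-plus-unique-subsequential-limit argument. The structure and the key observations match the paper's proof almost line for line.
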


\begin{proof}
We begin by observing that if $F=\oplusmy F_i$, then
\begin{equation}
  \label{d1}
t(F,\gG)=\prodim t(F_i,\gG),
\qquad \gG\in\cuq;
\end{equation}
if $\gG=G\in\cU$, this follows directly from the definition of
$t(F,G)$, and the general case $\gG\in\cuq$ follows by continuity.

Now, suppose that $\gG,\gG'\in\cuoo$ and that $t(F,\gG)=t(F,\gG')$ for
all $F\in\cucx$. Since trivially $t(K_1,\gG)=1=t(K_1,\gG')$, the
equality holds for all $F\in\cuc$, and thus by decomposing a graph $F$
into components and \eqref{d1},
$t(F,\gG)=t(F,\gG')$ for every graph $F$, \ie, $\gG=\gG'$.

This proves the first statement. The second statement is an immediate
consequence by compactness and a well-known general argument: Since
$\cuq$ is compact, there exist at least subsequences of $(\gG_n)$ that
converge. If $\gG\in\cuq$ is the limit of such a subsequence, then
$v(\gG)=\lim v(\gG_n)=\infty$, so $\gG\in\cuoo$, and
further, $t(F,\gG_n)\to
t(F,\gG)$ along 
the subsequence for every $F$, so $t(F,\gG)=t_F$ for $F\in\cucx$.
The first part now shows that any two convergent subsequences have the
same limit, which in a compact space implies that the entire sequence
converges. 
\end{proof}

\begin{remark}
  It is an interesting and still very much open question to study when
  a subset of all unlabelled graphs is sufficient to determine
  graph limits and  convergence to them. \refL{LUC} gives one general
  result.
Other results are known in special cases. For example, if $\gG_p$ is the
  graph limit 
in \refE{Equasi}
determined by a kernel $W$ that is constant $p\in\oi$,
  or, equivalently, $t(F,\gG_p)=p^{e(F)}$ for every $F$, 
and $G_n$ is a sequence of graphs with $v(G_n)\to\infty$,
then it suffices that $t(F,G_n)\to t(F,\gG_p)$ for the two graphs
$F=K_2$ and $F=C_4$ in order that  $G_n\to\gG_p$ (which is equivalent to
the well-known property that $G_n$ is quasirandom) 
\cite{ChungGW:quasi}, \cite{LSz}; this is
generalized by \citet{LSos} to a larger class of graph limits where
a finite set of $F$ suffices.
Another example is given by restricting $G_n$ to threshold graphs; in
  this case it suffices to consider stars $F$ \cite{threshold}.
\end{remark}

\section{Direct sums}\label{Sdef}

We first see how the functionals \tfdot{} behave for direct sums of graphs.
This is easier for connected $F$.

\begin{lemma}
  \label{L1}
Suppose that $F$, $G_1$ and $G_2$ are graphs with $F$ connected. Then
\begin{equation*}
  t(F,G_1\oplus G_2)
=
\parfrac{v(G_1)}{v(G_1)+v(G_2)}^{v(F)}t(F,G_1)
+
\parfrac{v(G_2)}{v(G_1)+v(G_2)}^{v(F)}t(F,G_2).
\end{equation*}
\end{lemma}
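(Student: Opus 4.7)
\medskip

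The plan is to compute $t(F, G_1\oplus G_2)$ by conditioning on where the image of a random map $\phi:V(F)\to V(G_1\oplus G_2)$ lands. Writing $n_i\=v(G_i)$ and $n\=n_1+n_2$, I think of $\phi$ as choosing $v(F)$ independent uniform vertices from $V(G_1)\cup V(G_2)$. The key observation is that if $\phi$ is a homomorphism and $F$ is connected, then the image $\phi(V(F))$ must lie entirely within one of $V(G_1)$, $V(G_2)$: indeed, any two vertices of $F$ are joined by a path, so if their images lay on opposite sides, some edge $vw\in E(F)$ would satisfy $\phi(v)\in V(G_1)$, $\phi(w)\in V(G_2)$, but $G_1\oplus G_2$ has no edges between the two parts, contradicting $\phi(v)\phi(w)\in E(G_1\oplus G_2)$. (The degenerate case $F=K_1$ fits trivially, since then the image is a single vertex, hence in exactly one side.)

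Next, since $V(G_1)\cap V(G_2)=\emptyset$ and $v(F)\ge1$, the two events $E_i\=\set{\phi(V(F))\subseteq V(G_i)}$ are disjoint, and I would compute each separately. Each coordinate of $\phi$ lies in $V(G_i)$ independently with probability $n_i/n$, so $\P(E_i)=(n_i/n)^{v(F)}$. Conditional on $E_i$, the map $\phi$ is uniformly distributed among the $n_i^{v(F)}$ maps $V(F)\to V(G_i)$, so the conditional probability that $\phi$ is a homomorphism into $G_1\oplus G_2$ (equivalently, into $G_i$, since all edges with both endpoints in $V(G_i)$ come from $E(G_i)$) equals $t(F,G_i)$.

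Combining these via the law of total probability, and using that the homomorphism event is contained in $E_1\cup E_2$,
\begin{equation*}
t(F,G_1\oplus G_2)
=\P(E_1)\,t(F,G_1)+\P(E_2)\,t(F,G_2)
=\Bigpar{\tfrac{n_1}{n}}^{v(F)}t(F,G_1)+\Bigpar{\tfrac{n_2}{n}}^{v(F)}t(F,G_2),
\end{equation*}
which is the claimed identity. The only mild subtlety — the ``main obstacle'' such as it is — is the containment of the homomorphism event in $E_1\cup E_2$, which requires the connectedness of $F$; everything else is a routine conditioning argument.
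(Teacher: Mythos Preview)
Your proof is correct and follows essentially the same approach as the paper: both argue that connectedness of $F$ forces any homomorphism $\phi:V(F)\to V(G_1\oplus G_2)$ to land entirely in one $V(G_j)$, then compute $\P(\phi(V(F))\subseteq V(G_j))=(v(G_j)/n)^{v(F)}$ and observe that the conditional probability of $\phi$ being a homomorphism is $t(F,G_j)$. Your version is slightly more explicit (the path argument, the disjointness of $E_1,E_2$), but the substance is identical.
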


\begin{proof}
  Since $F$ is connected, a mapping $\phi:V(F)\to V(G_1\oplus G_2)$ is a
  graph homomorphism $F\to G_1\oplus G_2$ if and only if $\phi$ maps
  $V(F)$ into either $V(G_1)$ or $V(G_2)$, and further $\phi$ is a graph
  homomorphism $F\to G_1$ or $F\to G_2$, respectively.
If $\phi$ is a uniformly random mapping $V(F)\to V(G_1\oplus G_2)$, 
and $j=1$ or 2,
then the   probability that $\phi$ maps $V(F)$ into $V(G_j)$ is
$\bigpar{v(G_j)/(v(G_1)+v(G_2))}^{v(F)}$,
and conditioned on this event, $\phi$ is a graph homomorphism 
$F\to  G_j$ with probability $t(F,G_j)$.
\end{proof}

We use this lemma both as an inspiration and as a tool to define
direct sums of graph limits.

\begin{theorem}
  \label{T1}
  \begin{thmenumerate}
\item
If\/ $\gG_1,\gG_2\in\cuoo$ and $0\le\ga\le1$, then there exists a unique
graph limit $\gG\in\cuoo$ such that
\begin{equation}
  \label{t1a}
t(F,\gG)
=
\ga^{v(F)}t(F,\gG_1)
+
(1-\ga)^{v(F)}t(F,\gG_2),
\qquad
F\in\cuc.
\end{equation}
\item
More generally, 
if $\seqq G m$, where $1\le m\le\infty$, is a finite or infinite
sequence of elements of $\cuoo$, and $\seqq\ga m\in\aaa$ is a sequence
of weights, then there exists a unique
graph limit $\gG\in\cuoo$ such that
\begin{equation}
  \label{t1b}
t(F,\gG)
=
\sumim
\ga_i^{v(F)}t(F,\gG_i),
\qquad
F\in\cucx.
\end{equation}
  \end{thmenumerate}
\end{theorem}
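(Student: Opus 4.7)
The plan is to dispatch uniqueness immediately via \refL{LUC} and then to construct the desired $\gG$ explicitly using the kernel representation recalled in \refS{Slimit}; this strategy treats finite and infinite, complete and deficient cases in one stroke.

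For uniqueness, I would observe that the right-hand sides of \eqref{t1a} and \eqref{t1b} prescribe $t(F,\gG)$ for every $F\in\cucx$ (in part (i), at $F=K_1$ both sides of \eqref{t1a} tautologically equal $1$, so \eqref{t1a} over all of $\cuc$ has the same content as its restriction to $\cucx$). By \refL{LUC}, these values determine $\gG\in\cuoo$.

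For existence I would represent each $\gG_i$ as $\gG_i=\gG_{W_i}$ for some kernel $W_i$ on a probability space $\csmui$, form the direct-sum kernel $W=\oplusm\ga_iW_i$ as in \refD{DWsum} (so $W$ lives on the disjoint union $\cS=\unionq\cS_i$ with measure $\mu$ satisfying $\mu|_{\cS_i}=\ga_i\mu_i$, agrees with $W_i$ on each diagonal block $\cS_i\times\cS_i$, and vanishes on off-diagonal blocks; in the deficient case adjoin an auxiliary block of weight $\ga_0\=1-\sumim\ga_i$ carrying $W_0=0$), and set $\gG\=\ggw$. To verify \eqref{t1b}, plug into \eqref{t} with \iid{} $\mu$-samples $X_1,\dots,X_{v(F)}$: since $W$ vanishes between distinct blocks and $F$ is connected, the product $\prod_{jk\in E(F)}W(X_j,X_k)$ is nonzero only when all $X_k$ lie in a common block $\cS_i$; this event has probability $\ga_i^{v(F)}$, and conditionally on it the samples are \iid{} $\mu_i$, so the conditional expectation equals $t(F,\gG_i)$. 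Summing over $i$ yields \eqref{t1b}; the $\ga_0$-block contributes zero because $t(F,\noll)=0$ for any $F\in\cucx$. Part (i) is the special case $m=2$, $\ga_1=\ga$, $\ga_2=1-\ga$.

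The main technical point is the infinite case $m=\infty$: one has to check that $\mu$ is a well-defined probability measure on a countable disjoint union (immediate since $\sumi\ga_i\le1$) and that the series in \eqref{t1b} converges, which it does absolutely, as $\sumi\ga_i^{v(F)}t(F,\gG_i)\le\sumi\ga_i\le1$ using $v(F)\ge1$ and $t(F,\gG_i)\in\oi$. The decomposition of $\int_{\cS^{v(F)}}$ into its countably many block-diagonal pieces is then a routine appeal to monotone convergence. Beyond this bookkeeping, the argument is formal and presents no serious obstacle.
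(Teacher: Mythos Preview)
Your argument is correct, but it takes a different route from the paper's own proof. The paper establishes uniqueness via \refL{LUC} just as you do, but for existence it argues through sequences of finite graphs: choose $G_{in}\to\gG_i$ with $v(G_{in})/\sum_j v(G_{jn})\to\ga_i$, extend \refL{L1} to $m$ summands, and pass to the limit; the deficient case is handled by adjoining a $\noll$ term, and the case $m=\infty$ by taking a further limit of the partial sums $\oplus_{i=1}^n\ga_i\gG_i$. This approach has the advantage of proving \refT{TA1} at the same time, since the convergence $G_n\oplus G_n'\to\ga\gG\oplus(1-\ga)\gG'$ is exactly the $m=2$ instance of the argument. Your kernel construction is cleaner and handles all cases uniformly, but it does not yield \refT{TA1} as a by-product; in the paper your argument appears essentially verbatim as the proof of \refT{TK2}, and is indeed flagged there parenthetically as an alternative route to existence in \refT{T1}.
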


\begin{definition}
  \label{Dsum}
The graph limit $\gG$ determined by \eqref{t1a} is denoted
$\agg$. 
More generally, the graph limit $\gG$ determined by \eqref{t1b} is
denoted
$\oplusm\ga_i\gG_i$.
\end{definition}

\begin{example}
  \label{Einfty}
We can now construct disconnected graphs, even with infinitely many
components, as direct sums. For example, let 
$\gG=\oplusooy 2^{-i} \gG_{1/i}$, 
with $\gG_{1/i}$ the connected graph limit defined
in \refE{Equasi}.
\end{example}

\begin{proof}[Proof of Theorems \refand{T1}{TA1}]
  The uniqueness in \refT{T1} follows from \refL{LUC}.

For the existence, we give a proof based on taking limits of graphs,
which also proves \refT{TA1}.
(An alternative construction is given by kernels and \refT{TK2}.)

Assume first that $m<\infty$ and $\sumim\ga_i=1$. Assume further that
$\gin\in\cU$, $1\le i\le m$ and $n\ge1$, are such that, as \ntoo,
$v(\gin)\to\infty$ and
\begin{align}
  \gin&\to\gG_i, 
\qquad 1\le i\le m,
\label{d3a}
\\
\frac{v(\gin)}{\sum_j v(\gjn)}
&\to\ga_i, 
\qquad 1\le i\le m.
\label{d3b}
\end{align}
Note that such graphs $\gin$ always may be found. For example, by
\eqref{rex}, there exist $H_{in}\in\cU$ with $v(H_{in})=n$ and
$H_{in}\to\gG_i$ as \ntoo; we may then take
$\gin\=H_{i,\ceil{n\ga_i}}$ if $\ga_i>0$ and, \eg, 
$\gin\=H_{i,\floor{\log n}+1}$ if $\ga_i=0$.

\refL{L1} extends immediately (\eg{} by induction) to disjoint sums of
several graphs, which yields, for every $F\in \cuc$, using \eqref{d3a}
and \eqref{d3b},
\begin{equation*}
t\Bigpar{F,\oplusm\gin}
=
\sumim
\parrfrac{v(\gin)}{\sum_j v(\gjn)}^{v(F)}t(F,\gin)
\to
\sumim \ga_i^{v(F)}t(F,\gG_i).
\end{equation*}
This shows by \refL{LUC} that $\oplusm\gin\to\gG$ for some
$\gG\in\cuq$ that satisfies \eqref{t1b}.

The special case $m=2$ yields \eqref{t1a} and, 
taking $G_{1n}$ and $G_{2n}$ as the given $G_n$ and $G_n'$,
\refT{TA1}.

In general, we have shown the existence of $\gG=\oplusm\ga_i\gG_i$
whenever $m<\infty$ and $\sumim\ga_i=1$.

Next, assume $m<\infty$ and $\sumim\ga_i<1$.
Let $\ga_0\=1-\sumim\ga_i$ and let $\gG_0\=\noll$ be as in
\refE{Enoll}.
By the case just shown, $\gG=\sumiom\ga_i\gG_i$ exists in
$\cuoo$. However, $t(F,\gG_0)=0$ for $F\in\cucx$, so \eqref{t1b} shows
that $\gG=\sumim\ga_i\gG_i$ too.

Finally, if $m=\infty$, define $\ggxn:=\sumin\ga_i\gG_i$.
By \eqref{t1b}, 
as \ntoo,
\begin{equation}
t\bigpar{F,\ggxn}
=
\sumin \ga_i^{v(F)}t(F,\gG_i)
\to
\sumi \ga_i^{v(F)}t(F,\gG_i),
\qquad
F\in\cucx 
.
\end{equation}
Hence, using \refL{LUC},
$\ggxn\to\gG$ for some $\gG$ satisfying \eqref{t1b}, \ie,
$\gG=\sumi\ga_i\gG_i$. 
\end{proof}

\section{Remaining proofs}\label{Spf}

\begin{proof}[Proof of \refT{TK2}]
Let $W\=\oplusm\ga_i W_i$.
If $\sumim\ga_i<1$, we first rewrite this  as
a complete direct sum
$W=\oplusom\ga_i W_i$ by \refD{DWsum}(ii). 

For any $F\in\cucx$, 
it follows by \refD{DWsum} that 
the integrand in \eqref{t} is 
non-zero only if all $x_k$ belong to the same $\cS_i$, and further
$i\neq0$; moreover, each $i>0$ gives a contribution 
$\ga_i^{v(F)} t(F,\gG_{W_i})$.
Thus 
$t(F,\gG_W)
=
\sumim
\ga_i^{v(F)}t(F,\gG_{W_i})
$ when $F\in\cucx$,
which completes the proof by \eqref{t1b} and \refD{Dsum}.
\end{proof}

\begin{lemma}
  \label{L0}
Let\/ $W$ be a kernel on a probability space
$(\cS,\mu)=(\cS,\cF,\mu)$. Define an operator $\nn$ on the
$\gs$-algebra $\cF$ by
\begin{align*}
  \nn(A)&\=\Bigset{x:\int_A W(x,y) \dd\mu(y)>0},
\qquad A\in\cF.
\intertext{Further, for a point $x\in\cS$, define}
  \nnx&\=\set{y: W(x,y) >0}.
\end{align*}
If\/ $W$ is connected, the following holds.
\begin{romenumerate}
  \item\label{l0a}
For $\mu$-\aex{} $x\in\cS$, $\mu(\nnx)>0$.
  \item\label{l0b}
If $\mu(A)>0$, then $\mu(\nn(A))>0$.
  \item\label{l0c}
If $\nn(A)\subseteq A$, then $\mu(A)=0$ or $\mu(A)=1$.
  \item\label{l0d}
If $A_1,A_2,\ldots\in\cF$, then 
$\nn\bigpar{\bigcup_{n=1}^\infty A_n}
=\bigcup_{n=1}^\infty \nn(A_n)$.
  \item\label{l0e}
If $\mu(A)>0$, then $\mu\bigpar{\bigcup_{n=1}^\infty \nn^n(A)}=1$.
\end{romenumerate}
\end{lemma}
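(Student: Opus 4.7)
The plan is to handle the five parts in order, with (i)--(iii) sharing one basic mechanism and (iv)--(v) following from them. The common mechanism is: assume the conclusion fails, use Fubini together with the symmetry $W(x,y) = W(y,x)$ to deduce that $W = 0$ almost everywhere on a rectangle $A \times (\cS \setminus A)$ for some $A$ with $0 < \mu(A) < 1$, or on all of $\cS \times \cS$; either situation contradicts the assumed connectedness via \refD{DWconn}. Concretely, for (i), put $B \= \{x : \mu(\nnx) = 0\}$ and suppose $\mu(B) > 0$: each $x \in B$ has $W(x, \cdot) = 0$ $\mu$-a.e., so Fubini gives $W = 0$ a.e.\ on $B \times \cS$, which contains $B \times (\cS \setminus B)$ (handling $0 < \mu(B) < 1$) and equals $\cS \times \cS$ up to a null set if $\mu(B) = 1$. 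For (ii), if $\mu(\nn(A)) = 0$ then for a.e.\ $x$, $\int_A W(x, y) \, d\mu(y) = 0$, so $W(x, y) = 0$ for a.e.\ $y \in A$; Fubini gives $W = 0$ a.e.\ on $\cS \times A$, and symmetry of $W$ turns this into $W = 0$ a.e.\ on $A \times (\cS \setminus A)$ (or on $\cS \times \cS$ if $\mu(A) = 1$). For (iii), if $\nn(A) \subseteq A$ and $0 < \mu(A) < 1$, every $x \in \cS \setminus A$ has $\int_A W(x, y) \, d\mu(y) = 0$, so Fubini yields $W = 0$ a.e.\ on $(\cS \setminus A) \times A$, which symmetry flips to $A \times (\cS \setminus A)$.

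For (iv), I would replace $A_n$ by its increasing partial unions $\bigcup_{k \leq n} A_k$ (the finite-union case is immediate from non-negativity of $W$) and then apply monotone convergence: the integral $\int_{A_n} W(x, y) \, d\mu(y)$ is positive for some $n$ if and only if $\int_{\bigcup_n A_n} W(x, y) \, d\mu(y)$ is. For (v), set $B \= \bigcup_{n=1}^\infty \nn^n(A)$; then (iv) gives $\nn(B) = \bigcup_{n=1}^\infty \nn^{n+1}(A) \subseteq B$, so (iii) forces $\mu(B) \in \{0, 1\}$, while $\mu(\nn(A)) > 0$ by (ii) combined with $\nn(A) \subseteq B$ rules out $\mu(B) = 0$.

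The main obstacle is bookkeeping rather than depth: one must track which coordinate of $W$ is being controlled at each Fubini step and apply the symmetry explicitly to flip the rectangle between $\cS \times A$ and $A \times \cS$ or between $(\cS \setminus A) \times A$ and $A \times (\cS \setminus A)$, and one must remember to pick up the degenerate case $W = 0$ a.e.\ of \refD{DWconn} whenever the offending measurable set turns out to have full measure. Measurability of $\nn(A)$ itself is not obvious from its definition but follows from the Fubini--Tonelli theorem applied to $(x, y) \mapsto W(x, y) \mathbf{1}_A(y)$, which also justifies talking about $\mu(\nnx)$ for $\mu$-a.e.\ $x$ in (i).
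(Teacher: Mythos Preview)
Your proposal is correct and follows essentially the same route as the paper's proof: Fubini plus symmetry to produce a forbidden rectangle in (i)--(iii), a countable-additivity/monotone-convergence argument for (iv), and the combination of (ii)--(iv) for (v). The only cosmetic difference is that the paper factors out two reusable observations up front---namely that $W=0$ a.e.\ on $A\times(\cS\setminus\nn(A))$ for every $A$, and that $W=0$ a.e.\ on $A\times\cS$ forces $\mu(A)=0$---whereas you redo that reasoning inline in each part.
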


\begin{proof}
First, note that for every $A\in\cF$,
if $x\notin\nn(A)$, then $W(x,y)=0$ for \aex{} $y\in A$, and thus by
Fubini
$W=0$ \aex{} on $(\cS\setminus\nn(A))\times A$. Hence, by the symmetry
of $W$, 
\begin{equation}
  \label{white}
\text{$W=0$ \aex{} on $A\times(\cS\setminus\nn(A))$},
\qquad 
A\in\cF.
\end{equation}

Next,  suppose that $A\in\cF$ is a subset of $\cS$ such that
  $W=0$ \aex{} on $A\times\cS$.
If $\mu(A)=1$, then $W=0$ \aex{} on $\cS\times\cS$, which by
  \refD{DWconn} contradicts the assumption that $W$ is connected.
Furthermore, $W=0$ \aex{} on $A\times(\cS\setminus A)$, which if
  $0<\mu(A)<1$ again contradicts \refD{DWconn}.
Consequently:
\begin{equation}
  \label{kalmar}
\text{$A\in\cF$ and $W=0$ \aex{} on $A\times\cS$}
\implies \mu(A)=0.
\end{equation}

\pfitemref{l0a}
Let $B\=\set{x:\mu(\nnx)=0}$.
If $x\in B$, then $W(x,y)=0$ for \aex{} $y\in\cS$, and thus, by
Fubini, $W=0$ \aex{} on $B\times\cS$. Hence, \eqref{kalmar} yields
$\mu(B)=0$.

\pfitemref{l0b}
If $\mu(\nn(A))=0$,
then \eqref{white} implies $W=0$ \aex{} on $A\times\cS$, and thus 
\eqref{kalmar} yields $\mu(A)=0$.

\pfitemref{l0c}
By \eqref{white}, $W=0$ \aex{} on
$A\times(\cS\setminus\nn(A))\supseteq A\times(\cS\setminus A)$. 
If $0<\mu(A)<1$, this means by \refD{DWconn} that $W$ is
disconnected, a contradiction.

\pfitemref{l0d}
Clearly, for every $x$, 
\begin{equation*}
  \begin{split}
x\notin\nn\biggpar{\bigcup_{n=1}^\infty A_n}
&\iff
\text{$W(x,y)=0$ for \aex{} $y\in\bigcup_n A_n$}
\\&
\iff 
\text{for every $n$,
$W(x,y)=0$ for \aex{} $y\in A_n$}
\\&
\iff 
\text{for every $n$,
$x\notin \nn(A_n)$}.
  \end{split}
\end{equation*}

\pfitemref{l0e}
Let $\ax\=\bigcup_{n=1}^\infty \nn^n(A)$.
Then, by \ref{l0d}, $\nn(\ax)=
\bigcup_{n=1}^\infty \nn^{n+1}(A)\subseteq \ax$.
Further, by \ref{l0b}, $\mu(\ax)\ge\mu(\nn(A))>0$. Hence \ref{l0c}
implies $\mu(\ax)=1$.
\end{proof}

\begin{lemma}\label{LW1}
  If $W$ is a connected kernel, then $\goow$ is \as{} connected.
\end{lemma}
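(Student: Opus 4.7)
The plan is to show that any fixed pair of vertices of $\goow$ a.s.\ lies in a common component; a countable union over pairs then gives the result. By symmetry and relabeling, it suffices to treat vertices $1$ and $2$, and after conditioning on $X_1=x_1, X_2=x_2$ the question becomes one about $\mu\times\mu$-a.e.\ $(x_1,x_2)$. The argument is a two-step Borel--Cantelli construction linked by the iterated kernel
\[
W^{*(k+1)}(x,y) \= \int W(x,z_1) W(z_1,z_2) \cdots W(z_k,y) \dd\mu(z_1) \cdots \dd\mu(z_k), \qquad W^{*1} \= W.
\]

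First I would show that for $\mu\times\mu$-a.e.\ $(x_1,x_2)$ there exists $k\ge 1$ with $W^{*(k+1)}(x_1,x_2)>0$. Induction on $k$, using only $W\ge 0$ and the definition of $\nn$, yields the identity $W^{*(k+1)}(x,y)>0\iff y\in\nn^k(\nn_x)$. Then \refL{L0}\ref{l0a} gives $\mu(\nn_x)>0$ for $\mu$-a.e.\ $x$, \refL{L0}\ref{l0e} applied with $A=\nn_x$ gives $\mu\bigl(\bigcup_{k\ge 1}\nn^k(\nn_x)\bigr)=1$ for such $x$, and Fubini promotes this to the desired $\mu\times\mu$-a.e.\ statement.

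Second, given $x_1,x_2$ with $W^{*(k+1)}(x_1,x_2)>0$ for some $k\ge 1$, I would construct infinitely many independent candidate paths from $1$ to $2$ and apply the second Borel--Cantelli lemma. Partition $\set{3,4,\ldots}$ into an infinite sequence of pairwise disjoint ordered $k$-tuples $T_m=(j_1^{(m)},\ldots,j_k^{(m)})$, $m\ge 1$, and let $E_m$ be the event that all of the edges $(1,j_1^{(m)}), (j_1^{(m)},j_2^{(m)}),\ldots,(j_k^{(m)},2)$ are present in $\goow$. Because the $T_m$'s are disjoint and do not meet $\set{1,2}$, conditionally on $X_1=x_1, X_2=x_2$ the events $(E_m)_{m\ge 1}$ involve disjoint i.i.d.\ $X_j$'s and disjoint edge indicators, hence are independent with common conditional probability $W^{*(k+1)}(x_1,x_2)>0$. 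The second Borel--Cantelli lemma then ensures some $E_m$ occurs a.s., yielding a path from $1$ to $2$.

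The main obstacle is engineering the Borel--Cantelli step so that it runs on genuinely independent events: the disjoint-tuple partition is precisely what yields conditional independence given only $X_1$ and $X_2$, and this is the bridge between the measure-theoretic reachability provided by \refL{L0} and the a.s.\ random-graph connectivity we want. The fact that $W\ge 0$ makes the identity $W^{*(k+1)}(x,y)>0\iff y\in\nn^k(\nn_x)$ hold without any null-set bookkeeping, which is what lets the measure-theoretic and probabilistic halves of the argument slot together cleanly.
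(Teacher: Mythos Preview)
Your proof is correct and uses the same measure-theoretic input as the paper (Lemma~\ref{L0}\ref{l0a} and \ref{l0e} to show that for a.e.\ $x_1$, a.e.\ $x_2$ lies in $\bigcup_{k\ge1}\nn^k(\nn_{x_1})$), but the probabilistic half is organized differently. The paper builds the path from $2$ back to $1$ one vertex at a time: having reached a vertex $i_\ell$ with $X_{i_\ell}\in\nn^\ell(\nn_{X_1})$, each new vertex $i$ has positive probability of satisfying $X_i\in\nn^{\ell-1}(\nn_{X_1})$ together with $i_\ell i\in E$, so a.s.\ one eventually appears; repeating $\ell$ times and then finding a common neighbour of $i_1$ and $1$ finishes. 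Your version instead fixes the path length $k+1$ in advance (via the iterated kernel $W^{*(k+1)}$), reserves infinitely many disjoint $k$-tuples of fresh vertices, and applies a single Borel--Cantelli to the independent events ``this tuple gives a complete path''. What you gain is a cleaner packaging: the identity $W^{*(k+1)}(x_1,x_2)=\P(E_m\mid X_1=x_1,X_2=x_2)$ makes the link between kernel positivity and path existence completely explicit, and the disjoint-tuple device yields genuine conditional independence in one stroke rather than a sequence of waiting-time arguments. The paper's approach, by contrast, avoids introducing $W^{*(k+1)}$ and the equivalence $W^{*(k+1)}(x,y)>0\iff y\in\nn^k(\nn_x)$ altogether, working directly with the sets $\nn^\ell(\nn_x)$; it is slightly more hands-on but needs no auxiliary objects.
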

\begin{proof}
  Recall the construction of $\goow$ in \refS{Slimit} using an \iid{}
  sequence $(X_i)_1^\infty$; we now do this construction by adding one
  vertex $i$ at a time, each time randomly choosing first $X_i$ and
  then, for each $j<i$, whether $ji$ is an edge or not.

Let $x\=X_1$.
By \refL{L0}\ref{l0a}, 
\as{} $\mu(\nnx)>0$; we assume this in the
sequel. Then, by \refL{L0}\ref{l0e}, 
$\mu\bigpar{\bigcup_{n=1}^\infty \nn^n(\nnx)}=1$, and thus \as{}
$X_2\in\bigcup_{n=1}^\infty \nn^n(\nnx)$. We assume that this happens
and choose an $n$ (depending on $X_1$ and $X_2$) such that 
$X_2\in\nn^n(\nnx)$. 
Let $i_n\=2$ and $x_n\=X_2$. 

Assume first that $n>1$.
For each new vertex $i$, the 
probability that $X_i\in\nn^{n-1}(\nnx)$
and that there is an edge $i_ni$ equals
$\int_{\nn^{n-1}(\nnx)}W(x_n,y)\dd\mu(y)$, which is $>0$ because
$x_n\in\nn^n(\nnx)$. Hence, \as, there exists some such $i>i_n$; let
$i_{n-1}$ be the first such $i$ and let $x_{n-1}\=X_{i_{n-1}}$.

Repeating $n-1$ times, we  \as{} find vertices $2=i_n<i_{n-1}<\dots<i_1$
that are connected to a path by edges in $\goow$, and with
$x_1\=X_{i_1}\in\nn(\nnx)$. 

Finally, for each new vertex $i>i_1$, the 
probability that 
there are edges $i_1i$ and $1i$ equals
$\int_{\cS}W(x_1,y)W(x,y)\dd\mu(y),$
which is $>0$ because 
$x_1\in\nn(\nnx)$, and thus there is a set $A\subseteq\nnx$ of
positive measure with $W(x_1,y)>0$ for $y\in A$, and $W(x,y)>0$ too
for $y\in A$ by the definition of $\nnx$.
Consequently, \as{} there exists some such vertex $i$, which means
that there is a path $1,i,i_1,\dots, i_n=2$ in $\goow$.

We have shown that \as{} the vertices 1 and 2 can be connected by a path
in $\goow$. By symmetry (exchangeability), the same is true for any
given pair of vertices $i$ and $j$. Since there is only a countable
number of vertices, \as{} $\goow$ is connected.
\end{proof}

\begin{lemma}
  \label{L2}
If\/ $W$ is a disconnected kernel, then $\ggw$ is disconnected.
\end{lemma}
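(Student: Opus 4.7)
The plan is to split into the two cases of Definition~\ref{DWconn} and show directly that $\ggw$ admits a nontrivial decomposition in the sense of Definition~\ref{Dconn}, using the test-graph criterion of \refL{LUC} (equivalently \refT{T1}).

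First, if $W=0$ \aex{} on $\cS\times\cS$, then by \eqref{t} we have $t(F,\ggw)=0$ for every graph $F$ with $e(F)\ge 1$, so $\ggw=\noll$; this graph limit was shown to be disconnected in \refE{Enoll}.

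Otherwise, there is a measurable $A\subset\cS$ with $\ga\=\mu(A)\in(0,1)$ and $W=0$ \aex{} on $A\times(\cS\setminus A)$ (and by symmetry on $(\cS\setminus A)\times A$). Set $B\=\cS\setminus A$, and form the two probability spaces $(A,\mu_A)$ and $(B,\mu_B)$ with $\mu_A\=\ga\qw\mu|_A$ and $\mu_B\=(1-\ga)\qw\mu|_B$; let $W_1\=W|_{A\times A}$ and $W_2\=W|_{B\times B}$ be the corresponding restricted kernels. I would then fix an arbitrary connected $F\in\cucx$ and evaluate $t(F,\ggw)$ via \eqref{t}: because $F$ is connected and $W$ vanishes \aex{} on the two off-diagonal blocks, the only vertex assignments $(x_1,\dots,x_{v(F)})$ giving a nonzero integrand are those with all $x_k\in A$ or all $x_k\in B$. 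Splitting the integral accordingly and rescaling by $\ga$ and $1-\ga$, one obtains
\begin{equation*}
t(F,\ggw) \;=\; \ga^{v(F)}\,t\bigpar{F,\gG_{W_1}} + (1-\ga)^{v(F)}\,t\bigpar{F,\gG_{W_2}}.
\end{equation*}

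By \refT{T1} and \refD{Dsum}, this identity for all $F\in\cucx$ characterizes the direct sum, so $\ggw=\ga\gG_{W_1}\oplus(1-\ga)\gG_{W_2}$. Since $\gG_{W_1},\gG_{W_2}\in\cuoo$ and $\ga\in(0,1)$, \refD{Dconn} gives that $\ggw$ is disconnected.

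The only mildly delicate point is verifying that the off-block vanishing of $W$, which holds only up to a null set in $(\cS\times\cS,\mu\otimes\mu)$, still forces the integrand in \eqref{t} to vanish on every $F$-vertex assignment that uses both $A$ and $B$. Because $F$ is connected, any such assignment contains at least one edge $ij\in E(F)$ with $x_i\in A$ and $x_j\in B$; by Fubini, for \aex{} $x_i\in A$ we have $W(x_i,x_j)=0$ for \aex{} $x_j\in B$, so the contribution of these mixed assignments to the integral is zero. This is exactly the same decomposition argument used in the proof of \refT{TK2}, specialized to a two-block partition, and it is the step that makes essential use of connectedness of the test graph.
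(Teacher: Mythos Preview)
Your proof is correct and follows essentially the same approach as the paper's. The only cosmetic difference is that the paper packages the second case by noting that $W$ agrees a.e.\ with the kernel $\ga W_1\oplus(1-\ga)W_2$ and then invokes \refT{TK2}, whereas you inline the computation of $t(F,\ggw)$ for connected $F$ directly; as you yourself observe, this is exactly the argument inside the proof of \refT{TK2}.
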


\begin{proof}
  If\/ $W=0$ \aex, then $\ggw=\noll$, which is disconnected, see
  \refE{Enoll}.

By \refD{DWconn}, the other possibility is that there exists
$A\subset\cS$ with $0<\mu(A)<1$ and $W=0$ \aex{} on
$A\times(\cS\setminus A)$.
Let $\cS_1\=A$ and $\cS_2\=\cS\setminus A$; let further, for $j=1,2$,
$\ga_j\=\mu(\cS_j)$ and $\mu_j\=\ga_j\qw \mu\restr{\cS_j}$,
and let $W_j\=W\restr{\cS_j\times\cS_j}$, considered as a
kernel on $(\cS_j,\mu_j)$. 
Then $\wx\=\ga_1W_1\oplus\ga_2 W_2$ is a kernel defined on \csmu{} and
$\wx=W$ a.e.
(More precisely, $\wx$ equals $W$ modified to be identically 0 on
$A\times(\cS\setminus A)$ and 
$(\cS\setminus A)\times A$.)
Hence, by \eqref{t}, $\ggw=\ggwx$. Consequently, using \refT{TK2},
\begin{equation*}
  \ggw=\ggwx = \gG_{\ga_1W_1\oplus\ga_2W_2}
=\ga_1\gG_{W_1}\oplus\ga_2\gG_{W_2},
\end{equation*}
and thus $\ggw$ is disconnected.
(Recall that $\ga_1=\mu(A)\in(0,1)$.)
\end{proof}

\begin{lemma}
  \label{L3}
If\/ $\gG\in\cuoo$ is disconnected, then $\goog$ is disconnected \as
\end{lemma}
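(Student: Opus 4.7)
The plan is to lift the disconnectedness of $\gG$ to a disconnectedness of a representing kernel via \refT{TK2}, and then to read off the disconnectedness of $\goow$ directly from the construction of $\goow$ via an \iid{} sequence of points in the underlying probability space.

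Concretely, since $\gG$ is disconnected, \refD{Dconn} gives $\gG=\agg$ for some $\gG_1,\gG_2\in\cuoo$ and some $\ga\in(0,1)$. Pick representing kernels $W_i$ for $\gG_i$ on probability spaces $(\cS_i,\mu_i)$, $i=1,2$, and set $W\=\ga W_1\oplus(1-\ga)W_2$ on $(\cS,\mu)$, the disjoint union construction of \refD{DWsum}. By \refT{TK2}, $W$ represents $\gG$, so $\goog\eqd\goow$; it therefore suffices to show that $\goow$ is \as{} disconnected.

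Next, I would use the standard construction of $\goow$ from \refS{Slimit}: take \iid{} $X_1,X_2,\dots$ with distribution $\mu$ and, conditionally on $(X_i)$, put edge $ij$ independently with probability $W(X_i,X_j)$. Let $V_k\=\set{i\in\bbN:X_i\in\cS_k}$ for $k=1,2$. By the definition of $W$ in \refD{DWsum}, $W(X_i,X_j)=0$ whenever $i$ and $j$ lie in different $V_k$'s, so there are \as{} no edges between $V_1$ and $V_2$ in $\goow$. Since $\mu(\cS_1)=\ga\in(0,1)$, the strong law of large numbers gives $|V_1\cap[n]|/n\to\ga$ and $|V_2\cap[n]|/n\to 1-\ga$ \as, so both $V_1$ and $V_2$ are \as{} infinite, hence non-empty. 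Thus $V(\goow)=V_1\cup V_2$ is \as{} a partition into two non-empty sets with no edges between them, which means $\goow$ is \as{} disconnected.

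There is really no serious obstacle: the heart of the argument is the observation that the direct-sum construction of $W$ produces a partition of $\cS$ that, under \iid{} sampling, immediately induces an edge-free partition of $V(\goow)$. The only small caveat is to check that both parts of the partition are non-empty, which is handled by the SLLN once we know $0<\ga<1$. The passage from $\goow$ back to $\goog$ is by the distributional equality noted at the end of \refS{Slimit}, which is why it does not matter that the representing kernel $W$ is not unique.
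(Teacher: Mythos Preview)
Your proof is correct and follows essentially the same route as the paper: represent $\gG$ by the direct-sum kernel $W=\ga W_1\oplus(1-\ga)W_2$ via \refT{TK2}, partition $\bbN$ into $V_1,V_2$ according to which $\cS_k$ the sample points fall in, and observe that there are no edges between the parts while both are \as{} non-empty. Your use of the SLLN to verify non-emptiness is slightly more explicit than the paper's, but the argument is the same.
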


\begin{proof}
  By \refD{Dconn}, $\gG=\agg$ where $0<\ga<1$.
Choose a kernel $W_j$, on a probability space $(\cS_j,\mu_j)$, that
represents $\gG_j$ ($j=1,2$) and assume as we may that $\cS_1$ and
$\cS_2$ are disjoint. 
By \refT{TK2}, $\gG$ is represented by the kernel 
$W\=\ga W_1\oplus(1-\ga)W_2$ on $\cS\=\cS_1\cup\cS_2$, and thus
\begin{equation*}
  \goog=\goow=G(\infty,\ga W_1\oplus(1-\ga)W_2).
\end{equation*}
However, it is evident from the construction of $\goow$ in
\refS{Slimit} that there are no edges between 
$V_1\=\set{i:X_i\in\cS_1}$ and
$V_2\=\set{i:X_i\in\cS_2}$, and that these sets are \as{} non-empty;
hence $\goog=\goow$ is \as{} disconnected.
\end{proof}

\begin{proof}[Proof of \refT{TK1}]
If $W$ is connected, then $\goog=\goow$ is \as{} connected by
\refL{LW1}, and thus by \refL{L3}, $\gG$ cannot be disconnected, \ie,
$\gG$ is connected.

Conversely, if $W$ is disconnected, then $\gG=\ggw$ is disconnected by
Lemma \ref{L2}.   
\end{proof}

\begin{proof}[Proof of \refT{TR1}]
  Let $W$ be a kernel representing $\gG$. If $\gG$ is connected, then
  $W$ is connected by \refT{TK1} and thus $\goog=\goow$ is \as{}
  connected by \refL{LW1}.

If $\gG$ is disconnected, then $\goog$ is \as{} disconnected by \refL{L3}.
\end{proof}

\begin{lemma}
  \label{LWcomp}
Let\/ $W$ be a kernel on a probability space \csmu.
Then there is a decomposition $\cS=\bigcup_{i=0}^M\cS_i$ into disjoint
measurable sets $\cS_i$, where $0\le M\le\infty$ and $\cS_0$ may be
empty but $\ga_i\=\mu(\cS_i)>0$ for $i\ge1$, such that if 
$W_i\=W\restr{\cS_i\times\cS_i}$ and $\mu_i\=\ga_i\qw\mu\restr{\cS_i}$
($i\ge1$), then $W_i$ is a connected kernel on $(\cS_i,\mu_i)$ 
for $i\ge1$,
and
$W=0$ \aex{} on $(\cS\times\cS)\setminus\bigcup_{i=1}^M(\cS_i\times\cS_i)$;
hence
$\oplusM\ga_iW_i=W$ a.e.
\end{lemma}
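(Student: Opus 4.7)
The plan is to construct the decomposition by first isolating the fiberwise-trivial part of $\cS$, then decomposing the remainder via Zorn's lemma into disjoint invariant pieces on which $W$ is connected.

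First, I would set $\cS_0\=\bigset{x\in\cS:\int_\cS W(x,y)\dd\mu(y)=0}$, the set of $x$ with $W(x,\cdot)=0$ $\mu$-a.e. By Fubini $W=0$ a.e.\ on $\cS_0\times\cS$, and by the symmetry of $W$ also on $\cS\times\cS_0$. Let $\cF_W$ denote the family of measurable $A\subseteq\cS$ with $W=0$ a.e.\ on $A\times(\cS\setminus A)$; by symmetry of $W$ and the inclusion $\bigcup_n A_n\times\bigpar{\bigcup_n A_n}^c\subseteq\bigcup_n(A_n\times A_n^c)$, the family $\cF_W$ is closed under complements and countable unions modulo null sets, hence is a sub-$\sigma$-algebra that contains $\cS_0$.

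Next I would apply Zorn's lemma to the poset of families $(\cS_i)$ of pairwise disjoint positive-measure elements of $\cF_W$ lying in $\cS\setminus\cS_0$, each with $W|_{\cS_i\times\cS_i}$ connected on $(\cS_i,\ga_i\qw\mu|_{\cS_i})$; such families are automatically countable (disjoint positive-measure subsets of a probability space are countable), and chains have upper bounds via unions. Let $(\cS_i)_{i=1}^M$ be a maximal family, $0\le M\le\infty$. To show it exhausts $\cS\setminus\cS_0$ mod null, suppose $A\=(\cS\setminus\cS_0)\setminus\bigcup_i\cS_i$ has positive measure, and form $\tilde A\=\bigcup_{n=0}^\infty\nn^n(A)$ with $\nn$ as in \refL{L0}. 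The argument of \refL{L0}\ref{l0d} (which does not use connectedness of $W$) shows $\nn(\tilde A)\subseteq\tilde A$, from which $\tilde A\in\cF_W$ follows as in the opening step of the proof of \refL{L0}; since each $\cS_i\in\cF_W$ and $A\cap\cS_i=\emptyset$, we obtain that $\nn(A)\cap\cS_i$ is null, and inductively so is $\tilde A\cap\cS_i$ for every $i$.

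The main obstacle is to produce a connected invariant positive-measure subset of $\tilde A$, which would contradict maximality. For this, I would iteratively split: if $W|_{\tilde A\times\tilde A}$ is already connected on $(\tilde A,\mu|_{\tilde A}/\mu(\tilde A))$, take $B=\tilde A$; otherwise, repeatedly extract an invariant proper subset of positive measure (each extraction possible precisely because the current piece fails to be connected), and by a tree-counting argument --- only countably many disjoint positive-measure pieces can be created since they all lie in a probability space --- the splitting procedure terminates at a countable ordinal with a connected invariant piece $B\subseteq\tilde A$. Once exhaustion $\bigcup_{i=1}^M\cS_i=\cS\setminus\cS_0$ mod null is secured, the required a.e.\ identity $W=\oplusM\ga_iW_i$ follows by \refD{DWsum}, because the complement of $\bigcup_i(\cS_i\times\cS_i)$ in $\cS\times\cS$ decomposes up to a null set as $(\cS_0\times\cS)\cup(\cS\times\cS_0)\cup\bigcup_{i\ne j}(\cS_i\times\cS_j)$, on each of which $W=0$ a.e.\ by construction.
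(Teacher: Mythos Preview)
The paper does not prove this lemma from scratch; it simply cites \cite[Lemma~5.17]{SJ178} and notes a trivial adjustment. Your attempt at a self-contained proof is sensible in outline, but the decisive step --- producing a connected invariant positive-measure subset of $\tilde A$ --- is not justified by the ``tree-counting argument'' you sketch. Repeatedly extracting a proper invariant subset yields a \emph{nested} chain $\tilde A=B_0\supsetneq B_1\supsetneq\cdots$, not a disjoint family, so the bound on the number of pairwise disjoint positive-measure sets does not force termination. At a limit ordinal the intersection $\bigcap_\alpha B_\alpha$ may have measure zero, leaving you with nothing; equivalently, you have not excluded the possibility that $\cF_W$ restricted to $\tilde A$ is non-atomic. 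Non-atomic sub-$\sigma$-algebras of probability spaces abound (e.g.\ the Borel $\sigma$-algebra on $\oi$), so some input specific to $W$ is needed here.

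The missing ingredient is this: for every $A\in\cF_W$ and a.e.\ $x\in A$, Fubini gives $\nn_x\subseteq A$ mod null, hence $\mu(\nn_x)\le\mu(A)$. Now if the non-atomic part $N$ of $(\cS,\cF_W,\mu)$ met $\cS\setminus\cS_0$ in positive measure, then for any $\gd>0$ one could cover $N$ (mod null) by countably many $\cF_W$-sets of measure $<\gd$ (non-atomicity plus a standard exhaustion), forcing $\mu(\nn_x)<\gd$ for a.e.\ $x\in N$; letting $\gd\downarrow0$ gives $\mu(\nn_x)=0$ a.e.\ on $N$, i.e.\ $N\subseteq\cS_0$ mod null, a contradiction. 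Thus $\cF_W$ is purely atomic on $\cS\setminus\cS_0$, and in particular $\tilde A$ contains an atom, which is the connected piece you need. With this in hand your Zorn argument (or simply listing the countably many atoms directly) finishes the proof.
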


\begin{proof}
  This is \citet[Lemma 5.17]{SJ178} expressed in the present
  terminology and notation, except that the definitions there allow
  $W_i=0$ \aex{} on $\cS_i\times\cS_i$ for some $i\ge1$; however, any
  such term may be deleted and $\cS_i$ included in $\cS_0$.
\end{proof}

\begin{proof}[Proof of \refT{Tcomp}, existence]
 Represent $\gG$ by a kernel $W$. By \refL{LWcomp}, 
$W=\oplusM\ga_i W_i$ \aex{} for some sequences $\seq W M$ of connected
 kernels and $\seq\ga M\in\aaap$, and thus $\gG$ is represented by 
$\oplusM\ga_i W_i$ too.

Let $\gG_i\=\gG_{W_i}$, and note that $\gG_i$ is connected by
\refT{TK1}.
By \refT{TK2}, $\oplusM\ga_i\gG_i$ is represented by the same kernel 
$\oplusM\ga_i W_i$ as $\gG$, and thus 
$\gG=\oplusM\ga_i\gG_i$.
\end{proof}

The proof of uniqueness is more complicated because the kernel $W$ is
not unique, and consequently it is not enough to show uniqueness of
the decomposition of $W$ in \refL{LWcomp}. To avoid having to use the
deep and rather subtle criteria for equivalence of kernels, we use
instead the random infinite graph $\goog$, which is uniquely
determined by $\gG$ (in the sense that its distribution is
determined); we thus study this further first.

\begin{proof}[Proof of \refT{TGcomp}]
Let $\gG=\oplusM\ga_i\gG_i$ be a decomposition as in \refT{Tcomp};
recall that we have proved existence of this (but not yet uniqueness).  
Choose a kernel $W_i$, on a probability space $(\cS_i,\mu_i)$, that
represents $\gG_i$ ($i\ge1$).
By \refT{TK1}, $W_i$ is connected, and by \refT{TK2}, $\gG$ is
represented by
$W\=\oplusM\ga_iW_i$ on $\csmu$, where $\cS=\bigcup_0^\infty \cS_i$
(assuming as we may that the sets $\cS_i$ are disjoint, and allowing
$\cS_0=\emptyset$). 
In the construction of $\goog=\goow$ in \refS{Slimit}, let
$\qv_i\=\set{k:X_k\in\cS_i}$
and let $\qg_i$ be the induced subgraph $\goow\restr{\qv_i}$.
(We here allow graphs with empty vertex set.)
Since $W(x,y)=0$ when $x\in\cS_i$ and $y\in\cS_j$ with $i\neq j$,
there are no edges in $\goow$ between $\qv_i$ and $\qv_j$; thus
$\goow=\oplusoM\qg_i$.
Furthermore, $W(x,y)=0$ for $x,y\in\cS_0$ too, so $E(\qg_0)=\emptyset$
and all vertices in $\qv_0$ (if any) are isolated.

By the law of large numbers, \as{} each $\qv_i$ has an asymptotic density 
\begin{equation*}
\lim_\ntoo|\qv_i\cap\nnn|/n
=\P(X_1\in\cS_i)
=\mu(\cS_i)
=\ga_i,
\qquad i\ge0.
\end{equation*}
In particular, for $i\ge1$, since then $\ga_i>0$,
$|\qv_i|=\infty$ a.s.
Moreover,
the subsequence
$\set{X_k:k\in\qv_i}$ is a sequence of \iid{} elements of $\cS_i$ with
the distribution $\mu_i$; hence the induced subgraph $\qg_i$ ($i\ge1$)
equals (in distribution), if we relabel the vertices in increasing
order as $1,2,\dots$, the infinite random graph
$G(\infty,W_i)=G(\infty,\gG_i)$.
In particular, by \refT{TR1}, \as{} each $\qg_i$, $i\ge1$, is
connected.
Consequently, the components of $\goog=\goow$ are \as{} given by
$\qg_i$, $i\ge1$, and the vertices in $\qv_0$, the latter being the
isolated vertices of $\goow$.
Moreover, for $i\ge1$, with $n_i(n)\=|\qv_i\cap\nnn|$, as \ntoo{} and
thus $n_i(n)\to\infty$,
\begin{equation*}
  \qg_i\restr{\qv_i\cap\nnn}
=G\bigpar{n_i(n),W_i}
\to\gG_i.
\end{equation*}
We have shown that
\set{(G_j,V_j)} \as{} is a permutation of 
$$
\set{(\qg_i,\qv_i):i\ge1}
\cup
\set{(K_1(x),\set x):x\in \qv_0},
$$
where $K_1(x)$ denotes the graph with vertex set \set{x} (and thus no
edges).
The results follow from the results just proven for $\qg_i$ and $\qv_i$.
\end{proof}

\begin{proof}[Proof of \refT{Tcomp}, uniqueness and components]
By \refT{TGcomp}\ref{tgd}, the sequence $\set{(\gG_i,\ga_i)}_{i=1}^M$ 
is \as{} a  permutation of the sequence 
\set{(\gG'_j,\nu_j):\nu_j>0} constructed there from the components of
$\goog$; hence the sequence is determined by $\gG$ up to
permutation. (In particular, $M$ is determined as being \as{} the
number of infinite components in $\goog$.)

Since it follows from \eqref{t1b} that 
the direct sum operation for graph limits is
associative in the natural way, we have 
$\gG=\oplusM\ga_i\gG_i=\ga_1\gG_1\oplus \oplusiiM\ga_i\gG_i$, and thus
$\gG_1$ is a component of $\gG$, and similarly every $\gG_i$, $i\ge1$.

Conversely, if $\gG'$ is a component of $\gG$, then
$\gG=\ga\gG'\oplus(1-\ga)\gG''$ for some $\ga>0$ and some
$\gG''\in\cuoo$. Decomposing $\gG''=\oplusiiMii\ga_i''\gG_i''$ 
(by the existence part already proven and relabelling),
we find
(again by associativity) a decomposition
$\gG=\oplusMii\ga_i'\gG_i'$ with $\gG'_1=\gG'$,
$\gG'_i=\gG_i''$ for $i\ge2$, $\ga'_1=\ga$ and
$\ga'_i=(1-\ga)\ga''_i$, $i\ge2$; hence
all $\ga_i'>0$ and all
$\gG'_i$ are connected. By the uniqueness just proved, $\gG'=\gG'_1=\gG_i$
for some $i$.
\end{proof}

\begin{proof}[Proof of \refT{TR3}]
  A corollary of \refT{TGcomp}\ref{tgc}\ref{tgd}, since
  the partition is $\set{V_j}$.
\end{proof}

\begin{proof}[Proof of \refT{TR4}]
    Another corollary of \refT{TGcomp} and its proof:
In the notation above, $G_1=\qg_i$ and $H\eqd G(\infty,\gG_i)$ if
$X_1\in\cS_i$, $i\ge1$, and $G_1=K_1$ and $H=E_\infty=G(\infty,\gG_0)$
if $X_1\in\cS_0$; furthermore, $\P(X_1\in\cS_i)=\mu(\cS_i)=\ga_i$.
\end{proof}

\begin{lemma}
  \label{LTR2}
If\/ $\gG\in\cuoo$ is connected, then $|\cci(\gng)|/n\pto1$. 
\end{lemma}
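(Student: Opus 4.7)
The plan is to invoke the representation of $\gG$ by a connected kernel and transfer the a.s.\ connectedness of the infinite random graph to a quantitative statement about $\gng$. By \refT{TK1} I would choose a connected kernel $W$ on some \csmu{} representing $\gG$, and use the construction of \refS{Slimit} to realize all $\gng = \goow\restr{\nnn}$ simultaneously from a single \iid{} sequence $(X_i)_1^\infty$, so that the distributional statement to be proved can be read off from this fixed probability space.

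First I would show that any two fixed vertices lie in the same component of $\gng$ with probability tending to $1$. Let $E_n$ be the event that $1$ and $2$ lie in the same component of $\gng$. Since $\gng$ is the subgraph of $\goow$ induced on $\nnn$, the events $E_n$ are increasing in $n$, and $\bigcup_n E_n$ is precisely the event that $1$ and $2$ lie in the same component of $\goow$, which by \refL{LW1} has probability $1$. Hence $q_n\=\P(E_n)\to 1$, and by exchangeability this same $q_n$ equals $\P(i,j\text{ in the same component of } \gng)$ for every pair $i\ne j$ in $\nnn$.

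Next I would convert this into control of the largest component via a pair-counting argument. Let $s_1\ge s_2\ge\dots$ denote the component sizes of $\gng$, so $|\cci(\gng)|=s_1$ and $\sum_k s_k=n$. Summing the pairwise indicators,
\begin{equation*}
\E \sum_k \binom{s_k}{2} = \binom{n}{2} q_n.
\end{equation*}
On $A_n\=\bigset{s_1\le(1-\gd)n}$ every $s_k\le(1-\gd)n$, so $\sum_k s_k^2\le(1-\gd)n\sum_k s_k=(1-\gd)n^2$ and hence $\sum_k\binom{s_k}{2}\le(1-\gd)n^2/2$; on $A_n^c$ the trivial bound $\sum_k\binom{s_k}{2}\le\binom{n}{2}$ suffices. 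Combining these,
\begin{equation*}
\binom{n}{2} q_n \le \binom{n}{2}\bigpar{1-\P(A_n)} + \tfrac12(1-\gd)n^2\,\P(A_n),
\end{equation*}
and since $\binom{n}{2}-\tfrac12(1-\gd)n^2 \sim \tfrac12\gd n^2$ this rearranges to $\P(A_n)\le (2/\gd)(1-q_n) + o(1)\to 0$ for each fixed $\gd>0$, which is exactly $|\cci(\gng)|/n\pto 1$.

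The only nontrivial input is the first step, and it is delivered at once by the coupling of $\gng$ with $\goow$ together with \refL{LW1}; everything else is bookkeeping. An alternative route would be via \refT{Tcut}\,(i), but that would require first establishing a positive lower density for the largest component before a cut argument could be applied, making it a longer path than the pair-counting one above.
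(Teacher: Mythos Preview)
Your proof is correct and shares the paper's key step: you both show that $q_n\=\P(1,2\text{ in the same component of }\gng)\to1$ by coupling all $\gng$ inside $\goow$ and using that the events increase to an event of probability~$1$ (via \refL{LW1}, or equivalently \refT{TR1}).

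Where you differ is only in the conversion from $q_n\to1$ to $|\cci(\gng)|/n\pto1$. You count same-component pairs and bound $\sum_k\binom{s_k}{2}$ on the event $\{s_1\le(1-\gd)n\}$. The paper takes a slightly shorter route: with $\gxin$ the component of $\gng$ containing vertex~$1$, exchangeability gives
\[
\E\bigl(n-|\gxin|\bigr)=(n-1)(1-q_n)=o(n),
\]
and since $|\cci(\gng)|\ge|\gxin|$ this yields $\E\bigl(1-|\cci(\gng)|/n\bigr)\to0$, hence convergence in probability. Both are elementary Markov-type arguments; the paper's version avoids the case split and the pair-counting bookkeeping.
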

\begin{proof}
  Let $\cE_n$ be the event that vertices 1 and 2 are connected by a
  path in $\gnw$. Then $\cE_n\upto\cE_\infty$ and $\P(\cE_\infty)=1$
  by \refT{TR1}, and thus $\P(\cE_n)\to1$.

Let $\cin:=\cci(\gng)$ and let $\gxin$ be the component of $\gnw$ that
contains vertex 1. Then $|\cin|\ge|\gxin|$ and, using the symmetry,
\begin{equation*}
  \E(n-|\cin|)
\le
  \E(n-|\gxin|)
=
(n-1)\P(\cE_n^c)
=
(n-1)(1-\P(\cE_n))
=o(n).
\end{equation*}
Thus $\E(1-|\cin|/n)\to0$. Hence $1-|\cin|/n\pto0$, \ie{}  $|\cin|/n\pto1$.
\end{proof}

\begin{proof}[Proof of \refT{TR2}]
Let $\qv_i$, $\qg_i$ and $n_i(n)$ be as in the proof of \refT{TGcomp},
and let $\qgin\=\qg_i\restr{\nnn}$. (We allow this subgraph to lack
vertices.) 
Then $\gng$ is \as{} the direct sum of $\qgin$, $i\ge1$, together with the
  isolated vertices in $\qv_0\cap\nnn$ (if any); hence
  \begin{equation}\label{julie}
|\cci(\gng)|
=
1\vee \max_{i\ge1}|\cci(\qgin)|. 	
  \end{equation}
(Note that $\qgin$ does not have to be connected.)

Since $\qg_i\eqd G(\infty,\gG_i)$ and $v(\qgin)=n_i(n)$ by the proof
of \refT{TGcomp}, we have, conditioned on $n_i(n)$,
$\qgin\eqd G(n_i(n),\gG_i)$, and thus by \refL{LTR2} applied to the
connected graph limit $\gG_i$ (and considering only $n$ with $n_i(n)\ge1$)
\begin{equation}\label{sjw}
  \frac{|\cci(\qgin)|}{n}
=
  \frac{|\cci(\qgin)|}{n_i(n)}\cdot \frac{n_i(n)}n
\pto
1\cdot\ga_i
=\ga_i.
\end{equation}

Let $\rho\=\max_i\ga_i$ (with $\rho=0$ if $M=0$) and let $\eps>0$. 
If $\rho>0$, choose $i$ such
that $\ga_i=\rho$; then \eqref{julie} and \eqref{sjw} yield 
\begin{equation}
  \label{fin}
|\cci(\gng)|/n
\ge 
|\cci(\qgin)|/n
>
\rho-\eps
\qquad\text{\whp}.
\end{equation}
(If $\rho=0$, this is trivial.)

On the other hand, for every $i$, $n_i(n)/n\pto\ga_i$, and thus 
\whp
\begin{equation}
  \label{kalle}
|\cci(\qgin)|\le v(\qgin)
=n_i(n)
< (\ga_i+\eps)n
\le (\rho+\eps)n.
\end{equation}
Choose $M'<\infty$ such that
$\sumimi\ga_i<\eps$.
(If $M<\infty$ we may simply take $M'=M$.)
Then, similarly, 
by the law of large numbers,
$\sumimi n_i(n)/n\pto\sumimi \ga_i<\eps$, and thus 
\whp, for all $i>M'$ simultaneously,
$$
|\cci(\qgin)|\le v(\qgin)=n_i(n) \le \sumimi n_i(n) < \eps n.
$$
Since \whp{} \eqref{kalle} holds for every $i\le M'$, we find that
\whp{} $|\cci(\qgin)|/n\le \rho+\eps$ for all $i$, and thus 
\eqref{julie} implies $|\cci(\gng)|/n\le \rho+\eps$ \whp.

Since $\eps>0$ is arbitrary, this and \eqref{fin} 
show that $|\cci(\gng)|/n\pto \rho$ as asserted. Furthermore, it is
clear that 
$\rho=1$ if and only if $M=1$ and $\ga_1=1$, which holds if and only
if $\gG$ is connected, \cf{} \refR{R1}.
Similarly,
$\rho=0$ if and only if $M=0$, which holds if and only if $\gG=\noll$,
\cf{} \refR{R0}.
\end{proof}

Finally we consider the minimal sizes of cuts, \refT{Tcut}.
This proof differs from the others in this section and does not use
kernels or $\goog$; instead it uses, not surprisingly,
the cut distance \cite{BCLi}.

\begin{proof}[Proof of \refT{Tcut}]
\pfitem{i}
Suppose that the conclusion fails; we will show that then $\gG$ is
disconnected.
Thus we assume that there exists $\gd>0$ such that for every $\eps>0$
there exists a subsequence $\cN_\eps\subseteq\bbN$ along which there
exists partitions
$V(G_n)=V'\cup V''$ with $|V'|,|V''|\ge\gd v(G_n)$
and $e(V',V'')\le\eps{ v(G_n)^2}$.
Choosing $n_k$ in the subsequence $\cN_{1/k}$, $k\ge1$, and such that
$n_k>n_{k-1}$ for $k\ge2$, we obtain a subsequence $(G_{n_k})$ which
we relabel as $(G_n)$; then for every $n$ there is a partition 
$V(G_n)=V'\cup V''$ with $|V'|,|V''|\ge\gd v(G_n)$
and $e(V',V'')=o\bigpar{ v(G_n)^2}$.

Let $G_n'\=G_n\restr{V'}$, 
$G_n''\=G_n\restr{V''}$, and $G_n^*\=G_n'\oplus G_n''$, \ie, $G_n$
with the edges between $V'$ and $V''$ deleted.
Then $V(G_n^*)=V(G_n)$ and 
\begin{equation}
  \label{arn}
|E(G_n^*)\Delta E(G_n)|=e(V',V'')=o\bigpar{v(G_n)^2}.
\end{equation}
It is easy to see that this implies, for any graph $F$,
\begin{equation*}
  |t(F,G^*_n)-t(F,G_n)|
\le
v(F)^2\frac{|E(G^*_n)\Delta E(G_n)|}{v(G_n)^2}
=o(1).
\end{equation*}
Since $G_n\to\gG$, \ie{} $t(F,G_n)\to t(F,\gG)$, we obtain
$G_n^*\to\gG$ in $\cuq$.
(Alternatively, \eqref{arn} immediately yields, in the notation of
\cite{BCLi}, $\gdcut(G^*_n,G_n)\le\rmdcut(G^*_n,G_n)=o(1)$, and thus
$G_n^*\to\gG$ by \cite[Theorem 2.6]{BCLi}.)

Note that $v(G_n'),v(G_n'')\to\infty$ and $\gd\le v(G_n')/v(G_n)\le
1-\gd$.
By the compactness of $\cuq$ and $[\gd,1-\gd]$, we may select a
subsequence such that, along the subsequence,
$G_n'\to\gG'$ and $G_n''\to\gG''$ for some $\gG',\gG''\in\cuoo$, and
further $v(G_n')/v(G_n)\to\ga\in[\gd,1-\gd]$.
By \refT{TA1}, we thus have (still along the subsequence)
\begin{equation*}
  G_n^*\=G_n'\oplus G_n''
\to \ga\gG'\oplus(1-\ga)\gG''.
\end{equation*}
Since also, as shown above, $G^*_n\to\gG$, we conclude that
$\gG=\ga\gG'\oplus(1-\ga)\gG''$, and thus $\gG$ is disconnected.
  
\pfitem{ii}
Suppose that $\gG$ is disconnected; then $\gG=\aggi$ for some
$\gG',\gG''\in\cuoo$.
Choose graphs $H_n',H_n''$ with $v(H_n')=v(H_n'')=n$ and
$H_n'\to\gG'$, $H_n''\to\gG''$ as \ntoo. Further, let
$n'(n)\=\floor{\ga v(G_n)}$ and $n''(n)\=v(G_n)-n'(n)$, and consider
only $n$ so large that $n'(n),n''(n)\ge1$.
Then, $H_n\= H'_{n'(n)}\oplus H''_{n''(n)}$ has the same number of
vertices as $G_n$ and, by \refT{TA1},
$H_n\to\aggi=\gG$.

It follows that the combined sequence $G_1,H_1,G_2,H_2,\dots$
converges (to $\gG$), and thus by \citet[Theorem 2.6]{BCLi}
$\gdcut(G_n,H_n)\to0$, where $\gdcut$ is the cut distance defined in
\cite{BCLi}. Moreover, by \cite[Theorem 2.3]{BCLi}, this implies
  $\hdcut(G_n,H_n)\to0$, where
  \begin{equation}\label{dncut}
	\begin{split}
\hdcut(G_n,H_n)
&\=\min_{\tH_n\cong H_n} \rmdcut(G_n,H_n)	
\\&
\=\min_{\tH_n\cong H_n} 
\max_{S,T\subseteq V(G_n)}\frac{|e_{G_n}(S,T)-e_{\tH_n}(S,T)|}{v(G_n)^2},	  
	\end{split}
  \end{equation}
taking the minima over $\tH_n\cong H_n$ with the same vertex set
$V(G_n)$ as $G_n$. Fix a $\tH_n$ that achives the minimum in \eqref{dncut}.
Since $H_n$ can be partitioned into two parts with $n'(n)$ and $n''(n)$
vertices and no edges in between, the same holds for $\tH_n$, and thus
there exists a partition $V(G_n)=V'\cup V''$ with
$|V'|/v(G_n)=n'(n)/v(G_n)\to\ga>0$,
$|V''|/v(G_n)=n''(n)/v(G_n)\to1-\ga>0$, $e_{\tH_n}(V',V'')=0$ and
thus, by \eqref{dncut},
$$
e_{G_n}(V',V'')\le \rmdcut(G_n,\tH_n)v(G_n)^2=o\bigpar{v(G_n)^2},
$$
which proves the result for any $\gd<\min(\ga,1-\ga)$.
\end{proof}

\newcommand\AAP{\emph{Adv. Appl. Probab.} }
\newcommand\JAP{\emph{J. Appl. Probab.} }
\newcommand\JAMS{\emph{J. \AMS} }
\newcommand\MAMS{\emph{Memoirs \AMS} }
\newcommand\PAMS{\emph{Proc. \AMS} }
\newcommand\TAMS{\emph{Trans. \AMS} }
\newcommand\AnnMS{\emph{Ann. Math. Statist.} }
\newcommand\AnnPr{\emph{Ann. Probab.} }
\newcommand\CPC{\emph{Combin. Probab. Comput.} }
\newcommand\JMAA{\emph{J. Math. Anal. Appl.} }
\newcommand\RSA{\emph{Random Struct. Alg.} }
\newcommand\ZW{\emph{Z. Wahrsch. Verw. Gebiete} }
\newcommand\DMTCS{\jour{Discr. Math. Theor. Comput. Sci.} }

\newcommand\AMS{Amer. Math. Soc.}
\newcommand\Springer{Springer-Verlag}
\newcommand\Wiley{Wiley}

\newcommand\vol{\textbf}
\newcommand\jour{\emph}
\newcommand\book{\emph}
\newcommand\inbook{\emph}
\def\no#1#2,{\unskip#2, no. #1,} 
\newcommand\toappear{\unskip, to appear}

\newcommand\webcite[1]{\hfil
   \penalty1000
\texttt{\def~{{\tiny$\sim$}}#1}\hfill\hfill}
\newcommand\webcitesvante{\webcite{http://www.math.uu.se/~svante/papers/}}
\newcommand\arxiv[1]{\webcite{http://arxiv.org/#1}}

\def\nobibitem#1\par{}

\end{document}